\newtheorem{thm}{Theorem}[section]
\newtheorem{cor}[thm]{Corollary}
\newtheorem{lem}[thm]{Lemma}
\theoremstyle{definition}
\newtheorem{rem}[thm]{Remark}
\numberwithin{equation}{section}
\newcommand{\N}{\mathbb{N}}
\newcommand{\Q}{\mathbb{Q}}
\newcommand{\R}{\mathbb{R}}
\newcommand{\I}{\mathbb{I}}
\newcommand{\FH}{\mathfrak{H}}
\newtheorem{prop}[thm]{Proposition}
\begin{document}

\title[rooted tree maps]{Rooted Tree Maps}

\author[T. Tanaka]{Tatsushi Tanaka}

\address{Department of Mathematics, Faculty of Science, Kyoto Sangyo University \endgraf
Motoyama, Kamigamo, Kita-Ku, Kyoto-City, 603-8555 Japan}

\email{t.tanaka@cc.kyoto-su.ac.jp}

\begin{abstract}

Based on Hopf algebra of rooted trees introduced by Connes and Kreimer, we construct a class of linear maps on noncommutative polynomial algebra in two indeterminates, namely rooted tree maps. 
We also prove that their maps induce a class of relations among multiple zeta values. 


\end{abstract}

\subjclass[2010]{05C05, 05C25, 11M32, 16T05}

\keywords{Hopf algebra of rooted trees, noncommutative polynomial algebra, multiple zeta values, quasi-derivation relation, Kawashima's relation}

\maketitle

\section{Introduction}\label{sec1}
A tree is a connected graph with no loops and a rooted tree is a tree with a special node called a root such that any edge is oriented away from it. 
We consider non-planar rooted trees which have no ordering of incoming edges for each vertex. 
Thanks to the non-planarity, we can define the free commutative algebra over $\Q$ generated by rooted trees. A product of rooted trees is sometimes called a rooted forest. 
%
%
An important operator on the algebra of rooted forests is the grafting operator $B_+$, which is a $\Q$-linear map defined by sending any rooted forest to a single tree by attaching the roots to a single new node which then becomes the new root. Because of non-planarity of rooted trees, there is a unique rooted forest $f$ for every rooted tree $t$ such that $t=B_+(f)$. 

It is known that the algebra $H$ of rooted trees is not only an algebra but a Hopf algebra (\cite{CK,Kre}). We also know that there exists the so-called Connes-Moscovici Hopf subalgebra $H_{\text{CM}}$ in $H$. 

Here comes a list of some notations in this paper. 
\begin{itemize}
\item $\Delta$ : the coproduct on $H$
\item $\FH :=\Q\langle x,y\rangle$, the noncommutative polynomial algebra over $\Q$ in $x$ and $y$
\item $\FH^1:=\Q+\FH y\supset \FH^0:=\Q+x\FH y$, the subalgebras of $\FH$
\item $M:\FH\otimes\FH\to\FH$ given by $M(v\otimes w)=vw$
\item $R_u$ : the right-concatenation map by $u$
\item $L_u$ : the left-concatenation map by $u$
\item $z:=x+y\in\FH$
\item $\Q[X]_{(d)}$ : the degree $d$ homogeneous part of the polynomial ring $\Q[X]$. 
\end{itemize}
Our first theorem is as follows. 
\begin{thm}\label{mthm1}
For any rooted forest $f(\neq\I)$, we 
can define the $\Q$-linear map from $\FH$ to $\FH$, which is also denoted by $f$, by 
\begin{itemize}
\item[(i)] If $f=\includegraphics[width=2.8mm,height=2.5mm]{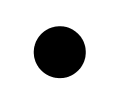}$, then $f(x):=xy$ and $f(y):=-xy$, 
\item[(i')] $B_+(f)(u):=R_yR_{y+z}R_y^{-1}f(u)$ for $u\in\{x,y\}$, 
\item[(i'')] If $f=gh$ with $g,h\neq\I$, then $f(u):=g(h(u))$ for $u\in\{x,y\}$,
\item[(ii)] For $w\in\FH$ and $u\in\{x,y\}$, $f(wu):=M(\Delta (f)(w\otimes u))$. 
\end{itemize}
\end{thm}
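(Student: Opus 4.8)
The plan is to read the four clauses as a recursive definition and to prove it is consistent by a nested induction: an outer induction on the number of nodes $n=\deg f$ of the forest, and, for each fixed $f$, an inner induction on the word length $\ell(w)$ carried by clause (ii). The outer base case is the single node, whose action on $x,y$ is prescribed by (i); the inner base case is $\ell(w)\le 1$, supplied by (i)--(i''). First I would fix the two conventions the statement leaves implicit, namely that $\I$ acts on $\FH$ as the identity and that $f(1)=0$ for the empty word $1$ whenever $f\neq\I$. In the connected graded Hopf algebra $H$ one has $\Delta(f)=f\otimes\I+\I\otimes f+\sum' f'\otimes f''$ with $f',f''$ of positive degree $<n$, so that clause (ii) specialised to $w=1$ collapses to the tautology $f(u)=f(u)$; hence (ii) imposes no constraint on the single-letter values and (i)--(i'') are free to prescribe them, with the inner recursion well-founded because the diagonal term $f\otimes\I$ lowers $\ell(w)$ while every off-diagonal term lowers the degree.

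A second thing to track is a legality invariant: for $f\neq\I$ each single-letter image $f(u)$ ($u\in\{x,y\}$) must lie in $\FH y$, so that $R_y^{-1}$ in (i') is only ever applied inside its domain; and, jointly, every map $f$ must preserve $\FH y$, which is exactly what makes the composite $g\bigl(h(u)\bigr)$ in (i'') land back in $\FH y$, since the inner value $h(u)$ already lies there. Both facts are easy to carry through the induction: the base values $xy,-xy$ and the image of $R_y$ lie in $\FH y$, and in (ii) the summand with $f_{(2)}=\I$ contributes $f_{(1)}(v)\,y\in\FH y$ while every other summand ends in a factor $f_{(2)}(u)\in\FH y$.

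Clause (i') is unambiguous, because non-planarity makes $t=B_+(f)$ determine $f$ uniquely, so the only genuine ambiguity in the single-letter rules is the factorisation in (i''): a non-tree forest can be written $f=gh$ in many ways. Writing $\Phi(f)$ for the map attached to $f$, well-definedness of (i'') amounts to $\Phi(g)\circ\Phi(h)$ being independent of the chosen factorisation. My plan is to derive this from the assertion that $\Phi\colon H\to\operatorname{End}_{\Q}(\FH)$ is an algebra homomorphism. The main tool is the upgrade of (ii) to the full comultiplicative identity
\[
 f(vw)=M\bigl(\Delta(f)(v\otimes w)\bigr)\qquad(v,w\in\FH),
\]
which I would prove by induction on $\ell(w)$, the inductive step being precisely the coassociativity of $\Delta$. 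Granting this identity, the composition law $\Phi(gh)=\Phi(g)\circ\Phi(h)$ on all of $\FH$ should follow from the multiplicativity $\Delta(gh)=\Delta(g)\,\Delta(h)$; and then the commutativity $\Phi(g)\circ\Phi(h)=\Phi(h)\circ\Phi(g)$ needed to resolve (i'') is automatic, because $gh=hg$ already holds in the commutative algebra $H$, whence $\Phi(gh)=\Phi(hg)$. The operator form of (i') on all of $\FH$, that $\Phi(B_+(f))=R_yR_{y+z}R_y^{-1}\circ\Phi(f)$, should likewise come from the cocycle relation $\Delta B_+(f)=B_+(f)\otimes\I+(\mathrm{id}\otimes B_+)\Delta(f)$ together with the comultiplicative identity.

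The step I expect to be the main obstacle is exactly this composition law and the closing of the nested induction around it. The difficulty is that $\deg(gh)=\deg g+\deg h$ can equal or exceed the degree at which one is working, so one cannot merely quote the composition law at a strictly earlier stage; and the seemingly cheaper route of peeling a node off a tree via (i') does not help, since the tree maps do not commute with the elementary right-concatenations $R_y$ and $R_{y+z}$, so commutativity cannot be reduced to that. A genuinely global argument through the coproduct — using multiplicativity, coassociativity, and the cocycle identity for $B_+$ while carefully interleaving the induction on degree with the induction on word length, and arranging that every forest other than the diagonal factors $f\otimes\I$ and $\I\otimes f$ produced by $\Delta(f)$ has strictly smaller degree — seems unavoidable, and this bookkeeping is the delicate point on which the whole scheme turns. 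Once it is in place, $\Q$-linearity is immediate from defining $\Phi(f)$ on the monomial basis of $\FH$ and extending linearly, and clauses (i)--(ii) hold by construction.
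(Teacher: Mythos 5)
Your skeleton matches the paper's: an outer induction on $\deg f$, an inner induction on word length via clause (ii), the observation that $f(1)=0$ makes the $w=1$ case of (ii) consistent with (i)--(i''), and an invariant guaranteeing $f(u)\in\FH y$ so that $R_y^{-1}$ in (i') is legal (the paper in fact carries the stronger invariant $f(\Q\cdot x+\Q\cdot y+\FH^0)\subset x\FH y$, which it needs later). You also correctly isolate the one genuine well-definedness issue: independence of (i'') from the chosen factorisation $f=gh$, which reduces to the commutativity $[g,h]=0$ of lower-degree maps.

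But your proposed resolution of that issue is circular. You want to prove $\Phi(gh)=\Phi(g)\circ\Phi(h)$ from $\Delta(gh)=\Delta(g)\Delta(h)$ and then conclude $\Phi(g)\circ\Phi(h)=\Phi(h)\circ\Phi(g)$ ``because $gh=hg$ in $H$, whence $\Phi(gh)=\Phi(hg)$.'' The base case of any such composition law is exactly clause (i''), $(gh)(u):=g(h(u))$, which presupposes a choice of factorisation; the two factorisations $g\cdot h$ and $h\cdot g$ produce two a priori different candidate maps, and asserting they coincide because they are attached to the same element of $H$ is precisely the well-definedness being proved. No amount of multiplicativity or coassociativity of $\Delta$ breaks this circle, because those identities only propagate the single-letter values to longer words. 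The paper breaks it with machinery absent from your plan: the auxiliary maps $\psi_f:=[f,R_x]$, the factorisation $\psi_f=R_y\phi_fR_x$ (property (a)), the structural fact that $\phi_f$ lies in $\Q[R_z,\text{rooted tree maps of degree}<\deg f]_{(\deg f-1)}$ (property (d), which by induction yields $[\phi_f,g]=[\phi_g,f]=0$), the relation $[f,R_z]=0$, and the vanishing criterion (Lemma~\ref{lem9}) that a $\Q$-linear map killing $1$ and commuting with both $R_x$ and $R_y$ is zero. A Jacobi-identity computation then shows $[[f,g],R_u]=0$ for $u\in\{x,y\}$, and Lemma~\ref{lem9} gives $[f,g]=0$. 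Without this (or some substitute direct proof that tree maps commute), your construction of (i'') is not shown to be well defined, so the theorem is not proved.
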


For the construction of rooted forest maps, it is convenient to introduce the additional map $\psi_f:=[f,R_x]$, where the bracket denotes the commutator. We call the number of nodes of a rooted forest $f$ the degree of $f$. Our second theorem states as follows. 
\begin{thm}\label{mthm2} 
For any rooted forests $f,g$, we show the following: 
\begin{itemize}
\item[(a)] There is a map $\phi_f$ such that $\psi_f=R_y\phi_f R_x$. 
\item[(b)] $f(\Q\cdot x+\Q\cdot y+\FH^0)\subset x \FH y$. 
\item[(c)] $\phi_{B_+(f)}=f+R_z\phi_f$. 
\item[(d)] $\phi_f\in\Q[R_z,\text{rooted tree maps}]_{(\deg f-1)}$. 
\item[(e)] $[f,g]=0$. 
\item[(f)] For any $v,w\in\FH$, $f(vw)=M(\Delta (f)(v\otimes w))$. 
\end{itemize}
\end{thm}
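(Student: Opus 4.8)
The plan is to treat the six assertions not in the listed order but in the order forced by their dependencies, running everything off two Hopf-algebraic inputs for the Connes--Kreimer algebra $H$: that $B_+$ is a Hochschild $1$-cocycle, $\Delta B_+ = B_+\otimes\I + (\mathrm{id}\otimes B_+)\Delta$, and that $\Delta$ is a coassociative algebra homomorphism on the commutative forest algebra, with $\I$ acting as the identity map. I also record the elementary operator identities $R_z = R_x+R_y$ and $R_{y+z}=R_z+R_y$, which convert the grafting rule (i$'$) into manipulable form. Throughout, Theorem~\ref{mthm1} is taken for granted, so each $f$ is already a well-defined $\Q$-linear operator obeying (i)--(ii); the task is only to verify the stated properties.

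First I would prove (f), since several later steps rest on it. Fixing $f$ and inducting on the word length of the second argument, the base case (length $1$) is exactly (ii). For the step, write the second argument as $w'u$ with $u\in\{x,y\}$; applying (ii) to peel $u$, then the inductive hypothesis to the resulting $f'(vw')$, and on the other side (ii) to $f''(w'u)$, the two Sweedler expansions are matched by coassociativity $(\Delta\otimes\mathrm{id})\Delta=(\mathrm{id}\otimes\Delta)\Delta$. With (f) in hand, the homomorphism property $\mathrm{map}(fg)=f\circ g$ on all of $\FH$ (where $\mathrm{map}(fg)$ denotes the operator attached to the product forest) follows by the same length induction, now using that $\Delta$ is multiplicative, $\Delta(fg)=\Delta(f)\Delta(g)$, with (i$''$) at the base; assertion (e) is then immediate, since $fg=gf$ in the commutative forest algebra forces $f\circ g=\mathrm{map}(fg)=\mathrm{map}(gf)=g\circ f$. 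For (b) I would first check the generator cases $f(x),f(y)\in x\FH y$ directly from the construction --- the base is clear, grafting preserves the leading $x$ and trailing $y$ because the operators in (i$'$) act only on the right, and the product case reduces to $g(x\FH y)\subset x\FH y$, i.e.\ (b) at lower degree --- and then obtain the general statement by peeling with (f): peeling the last letter shows every term of $f(\xi)$ ends in $y$, and peeling the first letter of $\xi\in x\FH y$ shows every term starts with $x$, the identity terms of the coproduct reducing in each case to the generator cases just treated.

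The core is the simultaneous induction on $\deg f$ for (a), (c), (d), organized by the three ways a forest is built. The base case $f=\includegraphics[width=2.8mm,height=2.5mm]{deg1.png}$ is a direct computation: using $\Delta(\includegraphics[width=2.8mm,height=2.5mm]{deg1.png})=\includegraphics[width=2.8mm,height=2.5mm]{deg1.png}\otimes\I+\I\otimes\includegraphics[width=2.8mm,height=2.5mm]{deg1.png}$ one finds $\psi_f(w)=w\,xy=R_yR_x(w)$, so (a) holds with $\phi_f=\mathrm{id}$. For a tree $B_+(f)$ I would expand $\psi_{B_+(f)}(w)=B_+(f)(wx)-B_+(f)(w)x$ through the cocycle property, which cancels the $B_+(f)\otimes\I$ term and leaves $\psi_{B_+(f)}(w)=\sum_{(f)} f'(w)\,B_+(f'')(x)$; rewriting each $B_+(f'')(x)$ by (i$'$), using $R_{y+z}=R_z+R_y$ and the inductive form $\psi_f=R_y\phi_fR_x$, should collapse this to $R_y(f+R_z\phi_f)R_x(w)$, which is simultaneously (a) for $B_+(f)$ and the recursion (c). For a product $f=gh$ I would apply the Leibniz rule $[gh,R_x]=g[h,R_x]+[g,R_x]h$ (legitimate because $\mathrm{map}(gh)=g\circ h$ by the homomorphism property), substitute $\psi_g=R_y\phi_gR_x$ and $\psi_h=R_y\phi_hR_x$, and push the operators into the required $R_y(\cdots)R_x$ shape using the commutation relations among the tree maps and the $R_u$. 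Finally, (d) follows by unwinding (c): since every tree is an iterated $B_+$ of forests and every forest is a product of trees, the recursion $\phi_{B_+(f)}=f+R_z\phi_f$ exhibits $\phi_f$ as a homogeneous polynomial of degree $\deg f-1$ in $R_z$ and the tree maps.

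I expect the grafting step for (a)/(c) to be the main obstacle, together with the product step. Converting the raw expression $\sum_{(f)}f'(w)B_+(f'')(x)$ --- in which the operator $R_yR_{y+z}R_y^{-1}$ from (i$'$) is entangled with the coproduct --- into the clean factorized form $R_y(f+R_z\phi_f)R_x$ is where all the structure must conspire, and it is here that (b) is needed to guarantee that $R_y^{-1}$ acts on genuinely $y$-terminating elements and that the leading and trailing letters behave. The product case adds the difficulty that $\phi_g$, $h$, $R_x$ and $R_y$ must be commuted past one another; establishing the precise commutation relations --- equivalently, showing that the bracket $[-,R_x]$ interacts with the $R_y\phi_{(-)}R_x$ factorization as a derivation --- is the delicate point, and is what ultimately makes the induction genuinely simultaneous across (a), (c) and (d).
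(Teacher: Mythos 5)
Your treatment of (a), (b), (c), (d) and (f) follows the paper's own route: the same induction on the degree of the forest, the same grafting computation collapsing $\psi_{B_+(g)}$ to $R_y(g+R_z\phi_g)R_x$ via the cocycle property and the splitting $R_{y+z}=R_y+R_z$ (Case I of Subsection \ref{subsec3.3}), the same Leibniz-rule treatment of the product case, and the same word-length inductions for (b) and (f). Two small points: for (d) you say it ``follows by unwinding (c)'', but (c) only covers trees; the product case needs the explicit formula $\phi_{gh}=g\phi_h+\phi_g h-\phi_g R_z\phi_h$ that falls out of your Case II computation, and you should record it. Your identification of the delicate spots --- that (b) is what legitimizes $R_y^{-1}$ in the grafting step, and that the product case turns on commuting the $\phi$'s past $R_x$ and $R_y$ --- matches where the paper actually works.

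The genuinely different route is (e), and it is also where the gap is. You derive commutativity from the claim that $f\mapsto(\text{the operator }f)$ is an algebra homomorphism out of the commutative algebra $H$, with the base case of that homomorphism property being the product rule (i$''$) of Theorem \ref{mthm1}, which you take for granted. But the well-definedness of (i$''$) \emph{is} generator-level commutativity: the forest $gh$ equals $hg$ in $H$, so for $f(u):=g(h(u))$ to be well defined one must already know $g(h(u))=h(g(u))$ for $u\in\{x,y\}$, and that is the entire difficulty of (e); the extension from generators to all words, which your induction via $\Delta(fg)=\Delta(f)\Delta(g)$ and (f) supplies, is the easy part. The paper proves Theorems \ref{mthm1} and \ref{mthm2} by a single intertwined induction on degree precisely for this reason: (e) at degrees $<n$ is what makes (i$''$) well defined at degree $n$, and (e) at degree $n$ is then established not through the Hopf structure but through the identity $[[f,g],R_u]=-[[g,R_u],f]-[[R_u,f],g]$, the factorizations $\psi_f=R_y\phi_f R_x$, the pairwise commutativity of $R_z,\phi_f,\phi_g$ supplied by (d) (with an inner induction on $\deg f$), and the rigidity Lemma \ref{lem9} (a map killing $1$ and commuting with $R_x$ and $R_y$ is zero). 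None of that computation appears in your proposal, so as written your argument for (e) assumes its own essential content. To repair it you must either prove commutativity on the generators separately --- which in practice means reproducing the paper's commutator argument --- or restructure the whole proof as the simultaneous induction the paper uses.
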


On the other hand, the multiple zeta values (abbreviated to MZV's) are defined, for an index $(k_1,\ldots ,k_r)\in\N^r$ with $k_1>1$, by the convergent series
\[
\zeta (k_1,\ldots ,k_r)=\sum_{m_1>\cdots >m_r>0}\frac{1}{{m_1}^{k_1}\cdots {m_r}^{k_r}}\quad \in\R . 
\]
It is known that there are many linear relatons among MZV's. 
For example, in \cite{T}, it is shown that the linear part of Kawashima relation \cite{Kaw} contains the quasi-derivation relation, which is a slightly but strictly larger class of relations than the derivation relation described in \cite{IKZ}. The quasi-derivation relation was first formulated in \cite{Kan} by modeling the Connes-Moscovici's Hopf algebra \cite{CM}. 

MZV's are often investigated under the algebraic language due to Hoffman \cite{H} which enables us to understand algebraic and combinatorial structures of MZV's in a down-to-earth way. 
The $\Q$-linear map $Z:\FH^0 \to \R$ called the evaluation map is defined by $Z(1)=1$ and
\[
Z(z_{k_1}\cdots z_{k_r})=\zeta(k_1,\ldots ,k_r) \quad (k_1>1), 
\]
where $z_k:=x^{k-1}y$ for $k\geq 1$. In what follows, all matters for MZV's are comprehended based on this algebraic setup. Here, note that to find a relation for MZV's amounts to find an element in $\ker Z$. 

As an application of rooted tree maps, we show the third theorem as follows. 
\begin{thm}\label{mthm3}
$f(\FH^0)\subset\ker Z$
for any rooted tree map $f$. 
\end{thm}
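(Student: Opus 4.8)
The plan is to show that every rooted tree map $f$ sends the space $\FH^0 = \Q + x\FH y$ of admissible words into $\ker Z$, i.e.\ that applying $f$ produces $\Q$-linear combinations of admissible words whose MZV evaluations sum to zero. My strategy is to reduce the claim to a known family of relations among MZV's, exploiting the connections the introduction already flags: the quasi-derivation relation (modeled on the Connes--Moscovici Hopf algebra) and the linear part of Kawashima's relation, which by \cite{T} contains the quasi-derivation relation. The defining recursion (i'), namely $B_+(f)(u) = R_y R_{y+z} R_y^{-1} f(u)$, is visibly built from the operator $R_z = R_{x+y}$ appearing in the quasi-derivation construction, so the first task is to make this dictionary precise.

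First I would establish the base case: for the single-node tree, $f(x)=xy$ and $f(y)=-xy$, and I would check directly that on $\FH^0$ this map coincides (or agrees up to the relevant kernel) with a derivation-type or Kawashima-type operator known to annihilate MZV's under $Z$. Second, I would handle the inductive structure. By part (i'') of Theorem~\ref{mthm1}, a forest map is a composite of tree maps, so it suffices to treat a single rooted tree map $f = B_+(g)$; and here parts (c) and (d) of Theorem~\ref{mthm2} are the crucial leverage. Part (d) asserts $\phi_f \in \Q[R_z, \text{rooted tree maps}]_{(\deg f - 1)}$, which lets me express the auxiliary map $\phi_f$, and hence $f$ itself via $\psi_f = R_y \phi_f R_x$ and part (a), in terms of $R_z$ and lower-degree tree maps. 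Combined with part (b), $f(\Q x + \Q y + \FH^0) \subset x\FH y$, this guarantees that the image genuinely lands in the span of admissible words, so that $Z$ is defined on it.

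The heart of the argument is then to show that the operator built from $R_z$ and the grafting recursion matches, inside $\ker Z$, the operators arising in Kawashima's relation. Concretely, I would introduce the Hoffman-dual or the harmonic/shuffle regularization machinery underlying Kawashima's relation, identify the image $f(w)$ for admissible $w$ with an element of the form produced by that relation, and then invoke the theorem of \cite{Kaw} (as reorganized in \cite{T}) that such elements lie in $\ker Z$. The coproduct compatibility in part (f), $f(vw)=M(\Delta(f)(v\otimes w))$, should be used to show that $f$ respects the relevant algebra structure (harmonic or shuffle product), which is exactly what is needed to align with the algebraic formulation of Kawashima's relation.

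The main obstacle I expect is precisely this identification step: translating the purely combinatorial/operator-theoretic definition of rooted tree maps into the analytic-algebraic language of Kawashima's relation, and verifying that the specific operator $R_y R_{y+z} R_y^{-1}$ in recursion (i') reproduces the correct coefficients appearing in the quasi-derivation operators. If a clean match with an existing relation proves elusive, the fallback is a direct induction on $\deg f$: use parts (c) and (d) to peel off one application of $B_+$, write $f$ in terms of $R_z$ and strictly lower-degree tree maps, and show by the inductive hypothesis together with the fact that $R_z$ and the base map $\includegraphics[width=2.8mm,height=2.5mm]{deg1.png}$ individually preserve $\ker Z$ on admissible words that the composite does as well. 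Ensuring that the intermediate expressions stay within the domain where $Z$ is defined — which is where part (b) is indispensable — will be the delicate bookkeeping throughout.
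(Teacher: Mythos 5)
You have correctly identified the target (the linear part of Kawashima's relation, Proposition \ref{linK}) and the supporting roles of parts (a)--(d) of Theorem \ref{mthm2}, but the proposal stops exactly where the paper's proof begins: the identification of $f$ with a Kawashima-type operator is not an expected obstacle to be negotiated later, it is the entire content of the argument, and you do not supply the idea that makes it work. The paper's mechanism (Theorem \ref{thm4.4} and its corollaries) is to conjugate $f$ by $\chi_x:=\tau L_x\varphi$ and prove that $\chi_x^{-1}f\chi_x$ is a harmonic-product multiplication operator $\mathcal{H}_w$ for some $w\in\FH y$; this is established by induction on $\deg f$ using the commutator identity $[\lambda(X),L_{z_l}]=XL_{z_l}+L_{x^l}X$ (Lemma \ref{lem4.3.1}) together with the rigidity statement that an element of $\mathfrak{W}$ vanishing on $1$ and commuting with every $L_{z_k}$ is zero (Lemma \ref{lem4.3.5}). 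Once $f\chi_x=\chi_x\mathcal{H}_w$ is known, the theorem follows because $f(x\FH y)=\chi_x\mathcal{H}_w(\FH y)\subset L_x\varphi(\FH y\ast\FH y)$, using the duality Lemma \ref{lem4.1} to absorb the $\tau$ in $\chi_x$. None of this conjugation structure, nor the auxiliary spaces $\mathfrak{W}$, $\mathfrak{W}'$ and the map $\lambda$, appears in your outline, and part (f) of Theorem \ref{mthm2}, which you propose as the bridge to the harmonic product, is not what the paper uses for this step.

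Your fallback argument also would not work as stated. Showing that $R_z$ and the one-vertex tree map ``individually preserve $\ker Z$'' is neither true in the needed sense nor sufficient: the claim is that $f$ maps $\FH^0$ \emph{into} $\ker Z$, not that it preserves $\ker Z$, and $R_z$ certainly does not send $\FH^0$ into $\ker Z$ (e.g.\ $R_z(xy)=xyx+xy^2\notin\FH^0$, and even projecting suitably one does not land in $\ker Z$). The decomposition of $\phi_f$ via (c) and (d) is used in the paper only to control commutators ($[\phi_f,g]=0$) and to expand $\phi_fR_x\tau\varphi$ as a sum of terms $d_j\tau\varphi L_{z_{n+1-j}}$ in the inductive proof of (A); it does not by itself reduce $f(\FH^0)\subset\ker Z$ to lower-degree cases, because $\ker Z$ is not stable under the individual factors. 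So the proposal, while pointing in the right general direction, has a genuine gap at the decisive step.
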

The proof is similar to the one we have discussed on the quasi-derivation relation in \cite{T}.



\vspace{5pt}

{\em Acknowledgements.} The author is grateful to scientific members and staffs in Max-Planck-Institut f\"{u}r Mathematik for their hospitality, where this work has been done. He is also thankful to Dr. Henrik Bachmann for helpful comments and advice. This work is also supported by Kyoto Sangyo University Research Grants.

\section{Rooted Trees}\label{sec2}

For the sake of conventions, we begin with a short review of the theory of rooted trees by Connes and Kreimer \cite{CK,Kre}.

\subsection{The algebra $H$ of rooted trees}\label{subsec2.1}
A tree is a non-empty connected finite graph with no loops and a rooted tree is a tree with a special node such that any edge is oriented away from it. The planarity of rooted trees is defined by taking a linear ordering of incoming edges for each vertex into account. In this paper we consider non-planar rooted trees and the topmost node represents the root. 

\begin{figure}
\centering
\includegraphics[width=6cm,height=1.2cm,clip]{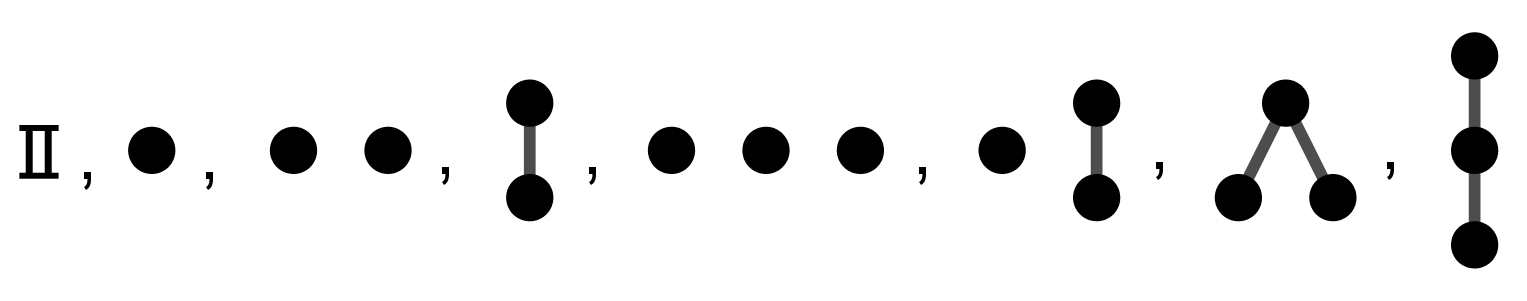}
\caption{Example of rooted forests}
\end{figure}

Let $H$ be the free commutative algebra over $\Q$ linearly generated by rooted forests: 
\[
H=\sum_{f : \text{rooted forest}}\Q\cdot f.
\]
Here the product of rooted trees is defined by the disjoint union. Thanks to the non-planarity, the product of trees is commutative. The neutral element is the empty forest denoted by $\I$ (this is {\em not} a tree but a forest). Obviously $H$ is algebraically generated by rooted trees. 
%

\subsection{Grafting operator}\label{subsec2.2}
Let $\mathcal{T}$ be the set of all rooted trees and $\langle\mathcal{T}\rangle_{\Q}$ be its linear span over $\Q$. The grafting operator is the $\Q$-linear map $B_+:H\to\langle\mathcal{T}\rangle_{\Q}$ defined by $B_+(\I)=\includegraphics[width=2.8mm,height=2.5mm]{deg1.png}$ and sending any rooted forest to a single tree by attaching the roots to a single new node which then becomes the new root: 
\[
B_+(t_1t_2\cdots t_n)=\raise -4mm \hbox{\includegraphics[width=2cm,height=1cm,clip]{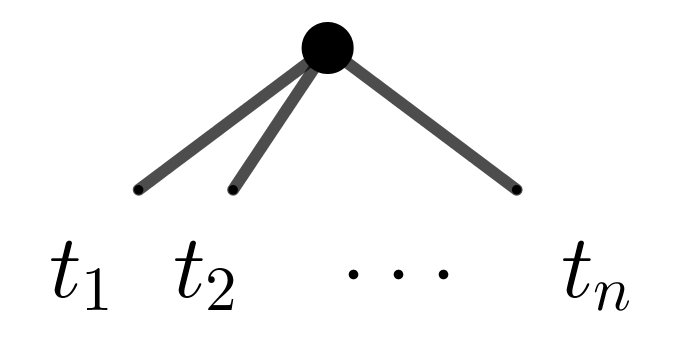}}
\]
for rooted trees $t_1,\ldots ,t_n$. 
%
%
Because of non-planarity of rooted trees, there is a unique forest $f$ for every rooted tree $t$ such that $t=B_+(f)$.

\subsection{Grading}\label{subsec2.3}
There is a natural grading on $H$ by the number of nodes. Let $\mathcal{F}_n$ be the set of all forests with $n$ nodes. Put $H_n:=\langle\mathcal{F}_n\rangle_{\Q}$ for $n\geq 1$ and $H_0:=\Q\I$. Then we have
\[
H=\bigoplus_{n\geq 0}H_n.
\]
The product has the grading property
\[
H_lH_k\subset H_{l+k}. 
\]

\subsection{Coproduct}\label{subsec2.4}
We define the coproduct $\Delta :H\to H\otimes H$. The coproduct is to be multiplicative, that is
\[
\Delta (fg)=\Delta (f)\Delta (g)
\]
and so we just need to define $\Delta (t)$ for tree $t$. 
Let $t=B_+(f)$, then we define $\Delta (t)$ by virtue of
\[
\Delta\circ B_+ =B_+\otimes\I + (id \otimes B_+)\circ\Delta, 
\]
that is
\begin{equation}\label{coprod}
\Delta (t)=\Delta\circ B_+(f):=t \otimes\I +(id \otimes B_+)\circ\Delta (f). 
\end{equation}
We also set $\Delta (\I)=\I\otimes\I$. 

This definition of $\Delta$ allows us to calculate the coproduct of rooted forests recursively. Here are some examples of coproducts of rooted trees and forests. 
\begin{align*}
\Delta(\includegraphics[width=2.8mm,height=2.5mm]{deg1.png})&=\Delta\circ B_+(\I) \\
&=B_+(\I)\otimes\I+(id \otimes B_+)\circ\Delta (\I) \\
&=\includegraphics[width=2.8mm,height=2.5mm]{deg1.png} \otimes\I+\I\otimes \includegraphics[width=2.8mm,height=2.5mm]{deg1.png} \\
\Delta (\includegraphics[width=6mm,height=2.3mm]{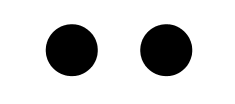})&=\includegraphics[width=6mm,height=2.3mm]{deg2-1.png} \otimes\I +2 \includegraphics[width=2.8mm,height=2.5mm]{deg1.png} \otimes \includegraphics[width=2.8mm,height=2.5mm]{deg1.png} +\I\otimes \includegraphics[width=6mm,height=2.3mm]{deg2-1.png} \\
\Delta (\raise -0.7mm \hbox{\includegraphics[width=3.6mm,height=4.2mm,clip]{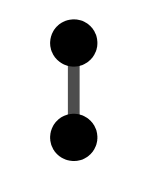}})&=\raise -0.7mm \hbox{\includegraphics[width=3.6mm,height=4.2mm,clip]{deg2-2.png}} \otimes\I+ \includegraphics[width=2.8mm,height=2.5mm]{deg1.png}\otimes \includegraphics[width=2.8mm,height=2.5mm]{deg1.png} +\I\otimes \raise -0.7mm \hbox{\includegraphics[width=3.6mm,height=4.2mm,clip]{deg2-2.png}} \\
\Delta (\raise -0.7mm \hbox{\includegraphics[width=4.8mm,height=4.2mm,clip]{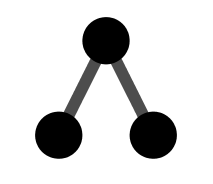}})&= \raise -0.7mm \hbox{\includegraphics[width=4.8mm,height=4.2mm,clip]{deg3-2.png}}\otimes\I +\includegraphics[width=6mm,height=2.3mm]{deg2-1.png}\otimes \includegraphics[width=2.8mm,height=2.5mm]{deg1.png} +2\includegraphics[width=2.8mm,height=2.5mm]{deg1.png} \otimes\raise -0.7mm \hbox{\includegraphics[width=3.6mm,height=4.2mm,clip]{deg2-2.png}} +\I\otimes \raise -0.7mm \hbox{\includegraphics[width=4.8mm,height=4.2mm,clip]{deg3-2.png}}
\end{align*}
\begin{prop}
The algebra morphism $\Delta$ is coassociative, that is 
\[
(id \otimes\Delta)\circ\Delta =(\Delta\otimes id )\circ\Delta .
\]
\end{prop}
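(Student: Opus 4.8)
The plan is to induct on the number of nodes (the grading of Section~\ref{subsec2.3}), reducing first to the case of a single tree and then peeling off the root by means of the grafting operator. First I would observe that it suffices to verify the identity on rooted trees. Indeed, $\Delta$ is an algebra morphism by definition, hence so are the two composites $(id\otimes\Delta)\circ\Delta$ and $(\Delta\otimes id)\circ\Delta$, being compositions of the algebra morphisms $\Delta$, $id\otimes\Delta$, $\Delta\otimes id$ from $H$ to $H\otimes H\otimes H$. Since $H$ is algebraically generated by rooted trees, if coassociativity holds for two forests $F_1,F_2$ then it holds for $F_1F_2$, as each composite sends a product to the product of the images. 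Thus at each stage of the induction I only need to treat a single tree, the forest case following from the tree case in lower degree.

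For the inductive step I would write $t=B_+(f)$ with $\deg f=\deg t-1$ and expand both sides using the defining relation \eqref{coprod}, namely $\Delta(t)=t\otimes\I+(id\otimes B_+)\Delta(f)$, together with the cocycle identity $\Delta\circ B_+=B_+\otimes\I+(id\otimes B_+)\circ\Delta$. Applying $id\otimes\Delta$ to $\Delta(t)$ and invoking the cocycle identity inside the second tensor slot yields
\[
t\otimes\I\otimes\I+(id\otimes B_+\otimes\I)\Delta(f)+(id\otimes id\otimes B_+)(id\otimes\Delta)\Delta(f),
\]
whereas applying $\Delta\otimes id$ to $\Delta(t)$ and using \eqref{coprod} in the first slot yields
\[
t\otimes\I\otimes\I+(id\otimes B_+\otimes\I)\Delta(f)+(id\otimes id\otimes B_+)(\Delta\otimes id)\Delta(f).
\]
The first two summands coincide, so the two expressions are equal precisely when $(id\otimes\Delta)\Delta(f)=(\Delta\otimes id)\Delta(f)$, i.e.\ exactly the coassociativity of $\Delta$ on $f$. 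Since $f$ has strictly fewer nodes than $t$, this holds by the inductive hypothesis, which closes the step. The base cases $\I$ and the single node $B_+(\I)$ are immediate, and in fact fall out of the same computation with $f=\I$.

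The main obstacle I anticipate is purely bookkeeping: keeping the three tensor factors correctly aligned while commuting $id\otimes B_+$ past $\Delta$, and making sure each of the two ``leftover'' third summands really is produced by the cocycle identity acting in the appropriate slot. The one genuine ingredient beyond such formal manipulation is the cocycle property of $B_+$, which is built into the recursive definition \eqref{coprod}; it is exactly what forces the two residual terms to reduce to coassociativity at lower degree. Everything else is formal once the reduction to trees, via multiplicativity of the two composites, is in place.
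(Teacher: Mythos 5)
Your proof is correct and follows essentially the same route as the paper, which simply states that the proof goes by induction on the grading; you have filled in the intended details (reduction to trees via multiplicativity, then the cocycle identity $\Delta\circ B_+=B_+\otimes\I+(id\otimes B_+)\circ\Delta$ to reduce coassociativity on $t=B_+(f)$ to coassociativity on the lower-degree forest $f$).
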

\begin{proof}
The proof goes by induction on the grading. 
\end{proof}
\begin{rem}
(i) There is another geometric way to define the coproduct $\Delta$ by using admissible cuts, which can be found in \cite{CK, Kre}. \\ 
(ii) The counit $\hat{\I}:H\to\Q$ is given by vanishing on all forests except for $\hat{\I}(\I)=1$. The antipode $S:H\to H$ is defined by
\[
m\circ (S\otimes id )\circ\Delta =\I\circ\hat{\I}=m\circ (id \otimes S)\circ\Delta ,
\]
where $m$ denotes the product on $H$. For example, 
\[
S(\I)=\I,\quad S(\includegraphics[width=2.8mm,height=2.5mm]{deg1.png})=-\includegraphics[width=2.8mm,height=2.5mm]{deg1.png},\quad S(\raise -0.7mm \hbox{\includegraphics[width=3.6mm,height=4.2mm,clip]{deg2-2.png}})=-\raise -0.7mm \hbox{\includegraphics[width=3.6mm,height=4.2mm,clip]{deg2-2.png}}+\includegraphics[width=6mm,height=2.3mm]{deg2-1.png},\quad S(\includegraphics[width=6mm,height=2.3mm]{deg2-1.png})=\includegraphics[width=6mm,height=2.3mm]{deg2-1.png}.
\]
Then it is known that $(H,m,\I,\Delta ,\hat{\I},S)$ forms a Hopf algebra (Hopf algebra of rooted trees). 
\end{rem}

\subsection{Natural growth}\label{subsec2.5}
Let $N:H\to H$ be the $\Q$-linear map defined by
\[
N(\I):= \includegraphics[width=2.8mm,height=2.5mm]{deg1.png},\quad N(t):=\sum_{v\text{: node of }t}t_v\ (t\neq\I)
,\]
where $t_v:=t\,\circ_v \includegraphics[width=2.8mm,height=2.5mm]{deg1.png}$ which stands for grafting a single leaf to the vertex $v$ of $t$. We also require it to be a derivation on the augmentation ideal $\bigoplus_{n\geq 1}H_n$. 
\begin{figure}[H]
\centering
\includegraphics[width=4cm,height=1.2cm,clip]{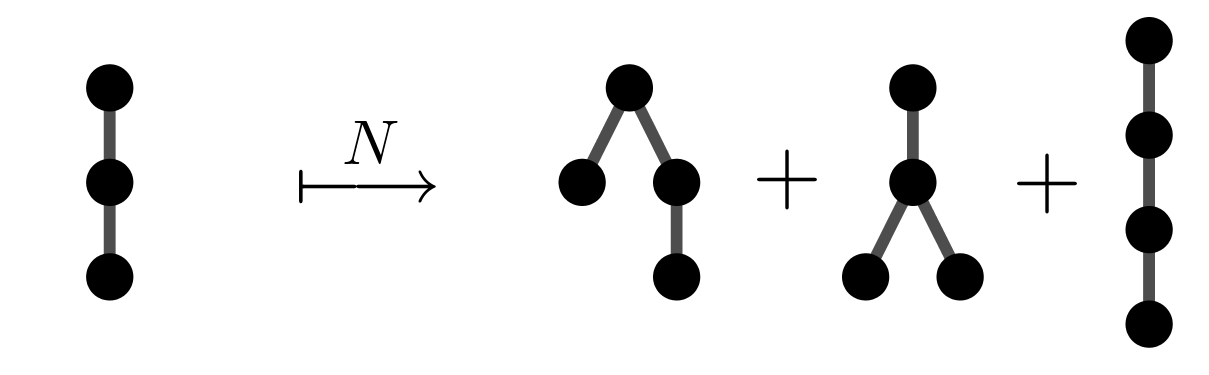}\qquad \raise 1mm \hbox{\includegraphics[width=4.5cm,height=1cm,clip]{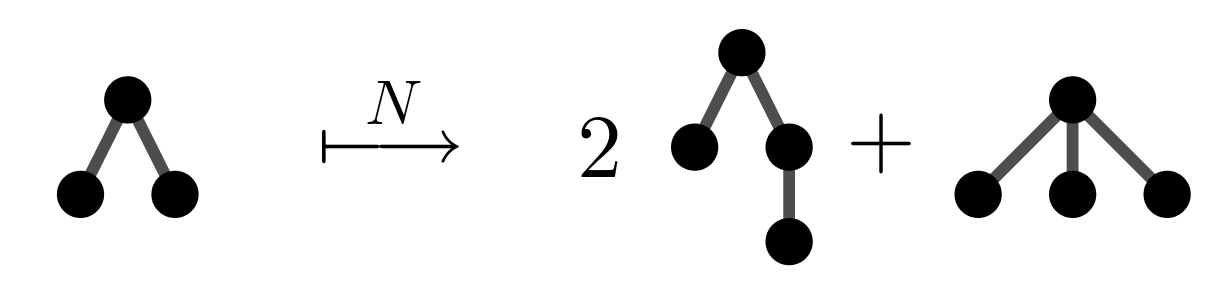}}
\caption{Example of natural growth}
\end{figure}
\noindent Let $\delta_k:=N^k(\I)$ for $k\geq 0$. For example, 
\[
\delta_1= \includegraphics[width=2.8mm,height=2.5mm]{deg1.png},\quad \delta_2= \raise -0.7mm \hbox{\includegraphics[width=3.6mm,height=4.2mm,clip]{deg2-2.png}},\quad \delta_3=\raise -1.8mm \hbox{\includegraphics[width=4.6mm,height=6mm,clip]{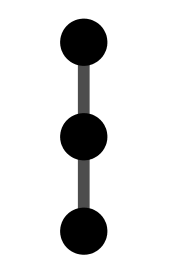}} +\raise -0.7mm \hbox{\includegraphics[width=4.8mm,height=4.2mm,clip]{deg3-2.png}},\quad \delta_4=3\,\raise -1.8mm \hbox{\includegraphics[width=4.8mm,height=6mm]{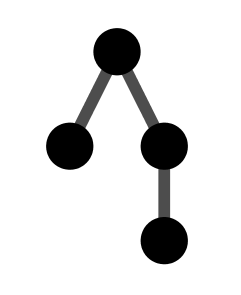}}+\raise -0.7mm \hbox{\includegraphics[width=6.6mm,height=4.2mm]{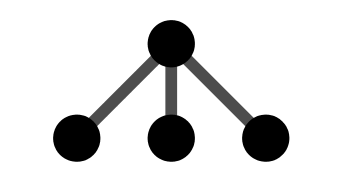}}+\raise -1.8mm \hbox{\includegraphics[width=4.8mm,height=6mm]{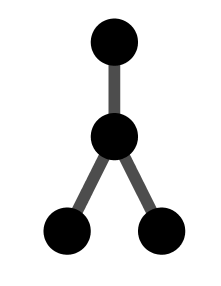}}+\raise -2.8mm \hbox{\includegraphics[width=3.6mm,height=7.8mm]{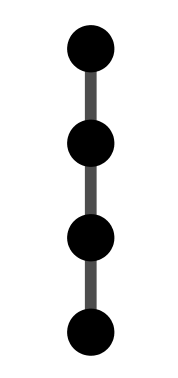}}. 
\]
\begin{prop}[\cite{CK}]
The elements $\delta_k\in H\ (k\geq 0)$ generate a Hopf subalgebra $H_{\text{CM}}\subset H$ which is called the Connes-Moscovici's Hopf subalgebra. 
\end{prop}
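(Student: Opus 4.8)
The final statement to be proved is that the elements $\delta_k = N^k(\I)$ for $k \geq 0$ generate a Hopf subalgebra $H_{\text{CM}} \subset H$. Writing $H_{\text{CM}}$ for the subalgebra of $H$ generated by $\{\delta_k : k \geq 0\}$, this subalgebra is automatically closed under the product and contains the unit $\I = \delta_0$. To upgrade it to a Hopf subalgebra, the plan is to verify three closure conditions: that $\Delta(H_{\text{CM}}) \subset H_{\text{CM}} \otimes H_{\text{CM}}$, that the counit $\hat{\I}$ restricts appropriately, and that the antipode $S$ maps $H_{\text{CM}}$ into itself. The counit condition is immediate since $\hat{\I}$ vanishes on all forests of positive degree and $\hat{\I}(\delta_0) = 1$. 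Once the coproduct condition is established, closure under $S$ follows formally: in any graded connected Hopf algebra the antipode is determined recursively from $\Delta$ by $S = -m \circ (S \otimes \mathrm{id}) \circ (\Delta - \mathrm{id} \otimes \I - \I \otimes \mathrm{id})$, so if $\Delta$ preserves $H_{\text{CM}}$ then an induction on degree shows $S$ does as well.

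The heart of the argument is therefore the coproduct condition, and the natural approach is to produce an explicit combinatorial formula for $\Delta(\delta_k)$ expressed purely in terms of the $\delta_j$. The plan is first to derive a recursion relating the natural-growth operator $N$ to the coproduct. Since $N$ is defined by grafting a single leaf at each node and extended as a derivation on the augmentation ideal, one expects a compatibility of the shape
\[
\Delta \circ N = (N \otimes \mathrm{id}) \circ \Delta + (\mathrm{id} \otimes N) \circ \Delta + (\mathrm{id} \otimes \delta_1 \cdot) \circ \Delta,
\]
or a similar identity capturing the fact that the new leaf is either grafted inside the left tensor factor, inside the right factor, or attached so as to create a new node sitting at the cut. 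First I would pin down the precise form of this identity by testing it against the degree-$1$, $2$, and $3$ examples of $\Delta$ and of $N$ already computed in the excerpt, and by checking it on $B_+$ using the defining relation $\Delta \circ B_+ = B_+ \otimes \I + (\mathrm{id} \otimes B_+) \circ \Delta$.

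With the $N$–$\Delta$ compatibility in hand, the plan is to prove by induction on $k$ a closed formula of the form
\[
\Delta(\delta_k) = \sum_{j=0}^{k} \binom{k}{j} \delta_{k-j} \otimes \delta_j,
\]
whose shape is already suggested by the computed cases $\Delta(\delta_1) = \delta_1 \otimes \I + \I \otimes \delta_1$ and $\Delta(\delta_2) = \delta_2 \otimes \I + 2\,\delta_1 \otimes \delta_1 + \I \otimes \delta_2$. The base cases $k = 0, 1$ are direct, and the inductive step applies the natural-growth recursion to $\delta_{k+1} = N(\delta_k)$, substitutes the inductive formula for $\Delta(\delta_k)$, and collects terms using the Pascal identity. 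If this formula holds, then $\Delta(\delta_k)$ lies in $H_{\text{CM}} \otimes H_{\text{CM}}$ for every $k$; since $\Delta$ is an algebra morphism and $H_{\text{CM}}$ is generated as an algebra by the $\delta_k$, it follows that $\Delta(H_{\text{CM}}) \subset H_{\text{CM}} \otimes H_{\text{CM}}$, completing the coproduct condition and hence the proof.

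The main obstacle I anticipate is establishing the natural-growth recursion cleanly. The difficulty is bookkeeping: $N$ acts by summing over all nodes of a tree, and one must track how an admissible cut interacts with the newly grafted leaf, distinguishing the cases where the leaf falls into the trunk versus a branch versus sits exactly at the cut edge. Verifying that the derivation property of $N$ is compatible with the multiplicativity of $\Delta$ across forest products requires care, and the cleanest route is likely to reduce everything to trees via $B_+$ and argue inductively on the number of nodes, so that the recursion for $N$ is matched term-by-term against the recursion \eqref{coprod} defining $\Delta$.
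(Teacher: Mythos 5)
The paper itself gives no proof of this proposition; it is quoted from Connes--Kreimer \cite{CK}, so your attempt has to be measured against the standard argument there. Your outer scaffolding is sound: reducing the Hopf-subalgebra property to coproduct closure is correct, the counit condition is indeed trivial, and in a graded connected Hopf algebra the antipode condition does follow from coproduct closure by induction on degree. The gap is in the heart of the argument. Your closed formula $\Delta(\delta_k)=\sum_{j=0}^k\binom{k}{j}\delta_{k-j}\otimes\delta_j$ is false, and the evidence you cite for it rests on a misreading of the paper's examples: the coproduct with the coefficient $2$, namely $\Delta(\delta_1^2)=\delta_1^2\otimes\I+2\,\delta_1\otimes\delta_1+\I\otimes\delta_1^2$, is that of the two-node \emph{forest} $\delta_1\cdot\delta_1$, whereas $\delta_2=N(\delta_1)$ is the two-node \emph{tree}, whose coproduct (also displayed in Subsection \ref{subsec2.4}) is $\Delta(\delta_2)=\delta_2\otimes\I+\delta_1\otimes\delta_1+\I\otimes\delta_2$. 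So the binomial formula already fails at $k=2$. It fails structurally at $k=3$: since $\delta_3$ is the sum of the three-node chain and the tree with two branches, the paper's displayed coproducts give
\[
\Delta(\delta_3)=\delta_3\otimes\I+\I\otimes\delta_3+3\,\delta_1\otimes\delta_2+\delta_2\otimes\delta_1+\delta_1^2\otimes\delta_1,
\]
which is not cocommutative and contains the product $\delta_1^2$ in the left tensor leg. No formula of your shape can hold, because any such formula would make $H_{\text{CM}}$ cocommutative, and the Connes--Moscovici Hopf algebra famously is not. Consequently your inductive step (``collect terms using the Pascal identity'') cannot close.

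The fix is to weaken the inductive hypothesis to what Connes and Kreimer actually prove: $\Delta(\delta_n)-\delta_n\otimes\I-\I\otimes\delta_n$ lies in $H_{\text{CM}}^+\otimes H_{\text{CM}}^+$, with left legs \emph{polynomials} in $\delta_1,\ldots,\delta_{n-1}$ rather than single generators --- which is all that coproduct closure requires. The propagating identity also needs correction: your guessed compatibility puts the extra term on the wrong side and omits a multiplicity. When the admissible cut severs the newly grown edge, the new leaf joins the \emph{left} (pruned) factor, and the growth node may be any node of the right (root-containing) factor; hence the correction term is of the form $(L_{\delta_1}\otimes Y)$ applied to the reduced coproduct, where $L_{\delta_1}$ is multiplication by $\delta_1$ on the left leg and $Y$ is the grading operator counting nodes on the right leg. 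You can calibrate the exact boundary terms against $\Delta(\delta_2)$ above: the naive $(N\otimes\mathrm{id}+\mathrm{id}\otimes N)\Delta(\delta_1)$ produces $2\,\delta_1\otimes\delta_1$ where the truth has coefficient $1$, which is exactly the discrepancy your version of the identity cannot absorb. With the corrected identity and the weakened induction, the rest of your plan (multiplicativity of $\Delta$, counit, antipode by degree induction) goes through.
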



\section{Rooted Tree Maps}\label{sec3}


By Subsection \ref{subsec2.3} the space $H$ is graded by the degree.  In this section we construct rooted tree maps on $\FH$ inductively by this degree and show that they satisfy the proposition mentioned in the Introduction.

\subsection{Degree $0$ and $1$}\label{subsec3.1}
The only rooted forest of degree $0$ is $\I$, which is regarded as the identity map on $\FH$. We know that $\Delta (\I)=\I\otimes\I$ and $\I(vw)=\I(v)\I(w)$ for any $v,w\in\FH$. Put $\psi_{\I}=\phi_{\I}:=0$. It obviously follows that $\psi_{\I}=R_y\phi_{\I}R_x$. 

We see that $\mathcal{F}_1=\{\includegraphics[width=2.8mm,height=2.5mm]{deg1.png}\}$. The coproduct of \includegraphics[width=2.8mm,height=2.5mm]{deg1.png} is given by
\begin{equation}\label{eq19}
\Delta (\includegraphics[width=2.8mm,height=2.5mm]{deg1.png})= \includegraphics[width=2.8mm,height=2.5mm]{deg1.png}\otimes\I+\I\otimes\includegraphics[width=2.8mm,height=2.5mm]{deg1.png} 
\end{equation}
as stated in Section \ref{subsec2.4}. We define, for $w\in\FH$ and $u\in\{x,y\}$, the $\Q$-linear map \includegraphics[width=2.8mm,height=2.5mm]{deg1.png} by
\begin{equation}\label{eq18}
\includegraphics[width=2.8mm,height=2.5mm]{deg1.png}(wu)= \includegraphics[width=2.8mm,height=2.5mm]{deg1.png}(w)u+w \includegraphics[width=2.8mm,height=2.5mm]{deg1.png}(u) 
\end{equation}
and
\[
\includegraphics[width=2.8mm,height=2.5mm]{deg1.png}(x)=- \includegraphics[width=2.8mm,height=2.5mm]{deg1.png}(y)=xy.
\]
\begin{lem}
We have
\begin{equation}\label{eq1}
[\includegraphics[width=2.8mm,height=2.5mm]{deg1.png} ,R_z]=0. 
\end{equation}
\end{lem}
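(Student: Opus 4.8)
The plan is to prove the operator identity \eqref{eq1} by showing that the two maps agree on every element of $\FH$, after which the whole argument collapses to a single cancellation. Write $b$ for the degree-one map, i.e. the $\Q$-linear map defined by the Leibniz-type rule \eqref{eq18} together with the values $b(x)=-b(y)=xy$. Since $R_z$ is right-concatenation by $z=x+y$, the identity $[b,R_z]=0$ is the same as the statement $b(wz)=b(w)\,z$ for all $w\in\FH$, because $(bR_z)(w)=b(wz)$ while $(R_zb)(w)=b(w)\,z$. So I would reduce everything to verifying this one equality.

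First I would record the key observation that $z$ lies in the kernel of $b$: by $\Q$-linearity and the defining values,
\[
b(z)=b(x)+b(y)=xy-xy=0.
\]
This cancellation is the heart of the lemma; everything else is formal bookkeeping with \eqref{eq18}.

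Next I would expand $b(wz)$. Because the Leibniz rule \eqref{eq18} is stated only for right multiplication by a single letter $u\in\{x,y\}$, I would first split $z=x+y$ and then apply the rule to each summand:
\[
b(wz)=b(wx)+b(wy)=\bigl(b(w)\,x+w\,b(x)\bigr)+\bigl(b(w)\,y+w\,b(y)\bigr)=b(w)\,z+w\,b(z).
\]
By the previous step $b(z)=0$, so the second term vanishes and $b(wz)=b(w)\,z$. This is precisely $bR_z=R_zb$, hence $[b,R_z]=0$, which is \eqref{eq1}.

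I do not expect a real obstacle here. The only point requiring mild care is that one cannot feed $z$ directly into \eqref{eq18}, since that rule is formulated for a single generator $u\in\{x,y\}$; one must decompose $z$ into $x$ and $y$ first. Once this is observed, the proof is a one-line application of the Leibniz rule followed by the cancellation $b(x)+b(y)=0$.
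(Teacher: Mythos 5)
Your proof is correct and follows essentially the same route as the paper: apply the Leibniz rule \eqref{eq18} to $b(wz)$ and use the cancellation $b(z)=b(x)+b(y)=0$. The only difference is that you explicitly decompose $z=x+y$ before invoking \eqref{eq18}, a point of care the paper's one-line proof glosses over; this is a harmless refinement, not a different argument.
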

\begin{proof}
For $w\in\FH$, $[\includegraphics[width=2.8mm,height=2.5mm]{deg1.png} ,R_z](w)=\includegraphics[width=2.8mm,height=2.5mm]{deg1.png}(wz)-\includegraphics[width=2.8mm,height=2.5mm]{deg1.png}(w)z=\includegraphics[width=2.8mm,height=2.5mm]{deg1.png}(w)z+w\includegraphics[width=2.8mm,height=2.5mm]{deg1.png}(z)-\includegraphics[width=2.8mm,height=2.5mm]{deg1.png}(w)z=0. $
\end{proof}

Then we are allowed to define the map associated to \includegraphics[width=2.8mm,height=2.5mm]{deg1.png} by
\[
\psi_{\includegraphics[width=2mm,height=1.8mm]{deg1.png}}:=\text{sgn}(u)[\includegraphics[width=2.8mm,height=2.5mm]{deg1.png} ,R_u]=[\includegraphics[width=2.8mm,height=2.5mm]{deg1.png} ,R_x],
\]
where $\text{sgn}(u)=1$ or $-1$ according to $u=x$ or $y$. Since $\psi_{\includegraphics[width=2mm,height=1.8mm]{deg1.png}}(w)=w \includegraphics[width=2.8mm,height=2.5mm]{deg1.png}(x)=wxy$, it follows that
\begin{equation}\label{eq3}
\psi_{\includegraphics[width=2mm,height=1.8mm]{deg1.png}}=R_y\phi_{\includegraphics[width=2mm,height=1.8mm]{deg1.png}}R_x
\end{equation}
by putting $\phi_{\includegraphics[width=2mm,height=1.8mm]{deg1.png}}=id.$ This implies that
\begin{equation}\label{eq16}
 \includegraphics[width=2.8mm,height=2.5mm]{deg1.png}(wu)= \includegraphics[width=2.8mm,height=2.5mm]{deg1.png}(w) u+\text{sgn}(u)\phi_{\includegraphics[width=2mm,height=1.8mm]{deg1.png}}(wx)y\quad (w\in\FH,u\in\{x,y\})
\end{equation}
and in particular
\begin{equation}\label{eq17}
\includegraphics[width=2.8mm,height=2.5mm]{deg1.png}(\Q\cdot x+\FH^1)\subset\FH y
\end{equation}
because of $\includegraphics[width=2.8mm,height=2.5mm]{deg1.png}(1)=0$. We also find the following. 
\begin{lem}\label{lem3.2}
$\includegraphics[width=2.8mm,height=2.5mm]{deg1.png}(\Q\cdot x+\Q\cdot y+\FH^0)\subset x\FH y. $
\end{lem}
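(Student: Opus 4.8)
Throughout write $D$ for the $\Q$-linear map associated with $\includegraphics[width=2.8mm,height=2.5mm]{deg1.png}$, so that $D(x)=xy$, $D(y)=-xy$, $D(1)=0$, and $D$ satisfies the right Leibniz rule \eqref{eq18}. The plan is to reduce the claim to the three pieces $x$, $y$, and $x\FH y$, and then to settle $x\FH y$ by tracking where $D$ sends each letter of a monomial. Since $\FH^0=\Q+x\FH y$ and $D$ is $\Q$-linear with $D(1)=0$, the inclusion $D(\Q\cdot x+\Q\cdot y+\FH^0)\subset x\FH y$ will follow once we know $D(x),D(y)\in x\FH y$ and $D(x\FH y)\subset x\FH y$. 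The first is immediate, as $D(x)=xy$ and $D(y)=-xy$ both lie in $x\FH y$.

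For the remaining inclusion I would first upgrade \eqref{eq18} to the full derivation identity $D(vw)=D(v)w+vD(w)$ for all $v,w\in\FH$, by a short induction on the length of $w$ using \eqref{eq18} at each step. Granting this, a monomial $w=u_1\cdots u_n$ with $u_i\in\{x,y\}$ expands as
\[
D(w)=\sum_{i=1}^{n}u_1\cdots u_{i-1}\,D(u_i)\,u_{i+1}\cdots u_n,
\]
and since $D(u_i)=\pm xy$ for every $i$, each summand is obtained from $w$ by replacing a single letter $u_i$ with $\pm xy$.

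Now suppose $w\in x\FH y$, i.e. $u_1=x$ and $u_n=y$. I then check that each summand still begins with $x$ and ends with $y$: for an interior index $2\le i\le n-1$ the outer letters $u_1=x$ and $u_n=y$ are untouched; for $i=1$ the inserted block $xy$ supplies the leading $x$ while the trailing $u_n=y$ survives; and for $i=n$ the letter $u_n=y$ is replaced by $-xy$, which still ends in $y$, while the leading $u_1=x$ survives. Hence every summand lies in $x\FH y$, and so does $D(w)$; by $\Q$-linearity this gives $D(x\FH y)\subset x\FH y$, completing the reduction.

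I expect the only delicate point to be exactly this endpoint bookkeeping: acting on the first or last letter must not destroy the ``begins with $x$'' or ``ends with $y$'' condition, and it is precisely this that upgrades the conclusion from $\FH y$, as in \eqref{eq17}, to $x\FH y$. Alternatively, one can bypass the monomial computation by combining \eqref{eq17}, which already yields $D(\FH y)\subset\FH y$, with the symmetric fact $D(x\FH)\subset x\FH$ (immediate from $D(xv)=x\bigl(yv+D(v)\bigr)$), together with the elementary identity $x\FH\cap\FH y=x\FH y$; whichever of the two routes is shorter in context is the one I would write out in full.
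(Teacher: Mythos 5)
Your argument is correct, but your primary route is genuinely different from the paper's. Writing $D$ for the degree-one map as you do: the paper does not invoke the full two-sided Leibniz rule at this point (that identity is only established in the proposition immediately following the lemma); instead it runs a single induction on word length using the right-concatenation recursion \eqref{eq16}, namely $D(wu)=D(w)u+\sgn(u)wxy$, to show that the image of $\Q\cdot x+\Q\cdot y+\FH^0$ lands in $x\FH$, and then simply intersects this with the containment in $\FH y$ already recorded as \eqref{eq17}, via $x\FH\cap\FH y=x\FH y$. Your main route first upgrades \eqref{eq18} to $D(vw)=D(v)w+vD(w)$ and then inspects each term of the letter-replacement expansion of a monomial in $x\FH y$; this verifies both endpoint conditions simultaneously and is fully self-contained, at the cost of front-loading an induction (the full derivation identity) that the paper defers. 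The alternative you sketch at the end is essentially the paper's proof, with one caveat: you call $D(x\FH)\subset x\FH$ ``immediate from $D(xv)=x(yv+D(v))$,'' but that is the \emph{left}-peeling Leibniz identity, which \eqref{eq18} alone does not give --- it again presupposes the full derivation property (or a separate induction, which is exactly what the paper's use of \eqref{eq16} supplies). Either version, written out, is a complete proof.
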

\begin{proof}
Using \eqref{eq16}, we have $\includegraphics[width=2.8mm,height=2.5mm]{deg1.png}(\Q\cdot x+\Q\cdot y+\FH^0)\subset x\FH$ by induction on the length of a word $w\in\FH^0$. We have already obtained \eqref{eq17}, and hence the lemma holds. 
\end{proof}

We obviously find that $[\includegraphics[width=2.8mm,height=2.5mm]{deg1.png} ,\includegraphics[width=2.8mm,height=2.5mm]{deg1.png} ]=0$. We also have the following. 
\begin{prop}
We have
\[
\includegraphics[width=2.8mm,height=2.5mm]{deg1.png}(vw)= \includegraphics[width=2.8mm,height=2.5mm]{deg1.png}(v)w+v \includegraphics[width=2.8mm,height=2.5mm]{deg1.png}(w)
\]
for any $v,w\in\FH$. 
\end{prop}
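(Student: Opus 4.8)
The plan is to reduce the general Leibniz identity to the special case already built into the definition. Write $D := \includegraphics[width=2.8mm,height=2.5mm]{deg1.png}$ for the degree-one map throughout this sketch. The defining relation \eqref{eq18} records exactly the Leibniz rule when the right-hand factor is a single letter, namely $D(wu) = D(w)u + wD(u)$ for $w \in \FH$ and $u \in \{x,y\}$; the content of the proposition is to promote this to an arbitrary right-hand factor. By $\Q$-linearity it suffices to prove the identity when $v$ and $w$ are monomials, and I would fix $v$ and argue by induction on the word length of $w$.

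For the base case $w = 1$ I first record that $D(1) = 0$: applying \eqref{eq18} with $w = 1$ and $u = x$ gives $D(x) = D(1)x + D(x)$, whence $D(1)x = 0$ and so $D(1) = 0$. Consequently $D(v\cdot 1) = D(v) = D(v)\cdot 1 + v\cdot D(1)$, which is the desired identity.

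For the inductive step, write $w = w'u$ with $u \in \{x,y\}$ and $w'$ a shorter word, and use associativity in $\FH$ to view $vw = (vw')u$. Applying \eqref{eq18} to the word $vw'$ and the letter $u$ yields $D(vw) = D(vw')u + vw'D(u)$, and the induction hypothesis applied to the pair $(v,w')$ rewrites $D(vw') = D(v)w' + vD(w')$. Substituting gives $D(vw) = D(v)w'u + vD(w')u + vw'D(u)$. On the other hand, expanding the target $D(v)w + vD(w)$ and applying \eqref{eq18} to $D(w'u) = D(w')u + w'D(u)$ produces the same three terms, so the two sides agree and the induction closes.

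This is a routine extension argument, and I do not anticipate any genuine obstacle. The only point deserving attention is that \eqref{eq18} is stated with the single letter on the right, so the induction must peel the last letter off the second factor $w$ rather than proceed symmetrically in $v$ and $w$.
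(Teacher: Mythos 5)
Your proof is correct and follows essentially the same route as the paper: the paper's proof is precisely an induction on the degree of the word $w$ using the defining recursion \eqref{eq18} (which encodes the primitive coproduct \eqref{eq19}). Your explicit treatment of the base case $D(1)=0$ and the peeling of the last letter off $w$ just spells out the details the paper leaves implicit.
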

\begin{proof}
By \eqref{eq19} and \eqref{eq18}, we obtain the proposition by induction on the degree of a word $w$. 
\end{proof}

\subsection{Degree $2$}\label{subsec3.2}
In the first step, we prepare a lemma which is required several times below. 
\begin{lem}\label{lem9}
{\it If a $\mathbb{Q}$-linear map $f:\mathfrak{H}\to\mathfrak{H}$ satisfies $[f,R_x]=[f,R_y]=0$ and $f(1)=0$, Then $f\equiv 0$. }
\end{lem}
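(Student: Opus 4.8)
The plan is to prove that $f(w)=0$ for every monomial $w\in\FH$ and then conclude by $\mathbb{Q}$-linearity. Since the words in $x$ and $y$ form a $\mathbb{Q}$-basis of $\FH=\Q\langle x,y\rangle$, it suffices to treat these basis elements, and I would organize the argument as an induction on the length (equivalently the total degree) of $w$.

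First I would unwind the hypotheses into a usable form. The relation $[f,R_x]=0$ means $fR_x=R_xf$, which reads $f(wx)=f(w)x$ for all $w\in\FH$; likewise $[f,R_y]=0$ gives $f(wy)=f(w)y$. In other words, $f$ commutes with appending either letter on the right, so knowing $f$ on a word of length $n-1$ determines it on the two words of length $n$ obtained by appending $x$ or $y$.

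For the base case, the empty word $1$ has length $0$ and $f(1)=0$ holds by assumption. For the inductive step I would take a word $w$ of length $n\geq 1$ and write it uniquely as $w=w'u$, where $u\in\{x,y\}$ is its final letter and $w'$ is a word of length $n-1$. The appropriate commutation relation then yields $f(w)=f(w'u)=f(w')u$, and the induction hypothesis $f(w')=0$ forces $f(w)=0$. This closes the induction, shows that $f$ annihilates every monomial, and hence gives $f\equiv 0$.

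I do not expect any genuine obstacle: the whole content is that every nonempty word ends in a single letter, so the two commutation relations allow one to peel the word down to $1$, where $f$ already vanishes. The only point meriting a word of care is that both $[f,R_x]=0$ and $[f,R_y]=0$ are truly needed, since the terminal letter of $w$ may be either $x$ or $y$; if either relation were dropped, the recursion would fail to reach words ending in the missing letter, and the conclusion would no longer follow.
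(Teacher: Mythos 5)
Your proof is correct and is essentially the paper's own argument: the paper writes $w=u_1u_2\cdots u_n$ and peels off the final letter repeatedly using $[f,R_{u_i}]=0$ until it reaches $f(1)=0$, which is exactly your induction on word length phrased as a chain of equalities.
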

\begin{proof}
Since $f$ is $\mathbb{Q}$-linear, it is only necessary to show $f(w)=0$ for any words $w\in\mathfrak{H}$. Write $w=u_1u_2\cdots u_n$ with $u_1,u_2,\ldots ,u_n\in\{x,y\}$. Since $[f,R_{u_i}]=0$ for any $1\le i\le n$ by assumption, we have
$$ f(w)=f(u_1u_2\cdots u_n)=f(u_1u_2\cdots u_{n-1})u_n=\cdots =f(1)u_1u_2\cdots u_n=0. $$
\end{proof}

There are two rooted forests of degree $2$: \includegraphics[width=6mm,height=2.3mm]{deg2-1.png} and \raise -0.7mm \hbox{\includegraphics[width=3.6mm,height=4.2mm,clip]{deg2-2.png}}. Their coproducts are
\begin{equation}\label{eq2}
\Delta (\includegraphics[width=6mm,height=2.3mm]{deg2-1.png})=\includegraphics[width=6mm,height=2.3mm]{deg2-1.png} \otimes\I +2 \includegraphics[width=2.8mm,height=2.5mm]{deg1.png} \otimes \includegraphics[width=2.8mm,height=2.5mm]{deg1.png} +\I\otimes \includegraphics[width=6mm,height=2.3mm]{deg2-1.png}, \quad 
\Delta (\raise -0.7mm \hbox{\includegraphics[width=3.6mm,height=4.2mm,clip]{deg2-2.png}})=\raise -0.7mm \hbox{\includegraphics[width=3.6mm,height=4.2mm,clip]{deg2-2.png}} \otimes\I+ \includegraphics[width=2.8mm,height=2.5mm]{deg1.png}\otimes \includegraphics[width=2.8mm,height=2.5mm]{deg1.png} +\I\otimes \raise -0.7mm \hbox{\includegraphics[width=3.6mm,height=4.2mm,clip]{deg2-2.png}}. \end{equation}
We define, for $w\in\FH$ and $u\in\{x,y\}$, the $\Q$-linear maps \includegraphics[width=6mm,height=2.3mm]{deg2-1.png} and \raise -0.7mm \hbox{\includegraphics[width=3.6mm,height=4.2mm,clip]{deg2-2.png}} by
\begin{align}
\includegraphics[width=6mm,height=2.3mm]{deg2-1.png} (wu)&=\includegraphics[width=6mm,height=2.3mm]{deg2-1.png} (w)u+2\includegraphics[width=2.8mm,height=2.5mm]{deg1.png} (w)\includegraphics[width=2.8mm,height=2.5mm]{deg1.png} (u)+w\includegraphics[width=6mm,height=2.3mm]{deg2-1.png} (u), \label{eq20} \\
\raise -0.7mm \hbox{\includegraphics[width=3.6mm,height=4.2mm,clip]{deg2-2.png}} (wu)&=\raise -0.7mm \hbox{\includegraphics[width=3.6mm,height=4.2mm,clip]{deg2-2.png}} (w)u+\includegraphics[width=2.8mm,height=2.5mm]{deg1.png} (w)\includegraphics[width=2.8mm,height=2.5mm]{deg1.png} (u)+w\raise -0.7mm \hbox{\includegraphics[width=3.6mm,height=4.2mm,clip]{deg2-2.png}} (u) \label{eq21}
\end{align}
and for $u\in\{x,y\}$, 
\begin{equation}\label{eq7}
\includegraphics[width=6mm,height=2.3mm]{deg2-1.png} (u)=\includegraphics[width=2.8mm,height=2.5mm]{deg1.png} (\includegraphics[width=2.8mm,height=2.5mm]{deg1.png} (u)), \quad \raise -0.7mm \hbox{\includegraphics[width=3.6mm,height=4.2mm,clip]{deg2-2.png}} (u)=R_yR_{y+z}R_y^{-1}\includegraphics[width=2.8mm,height=2.5mm]{deg1.png} (u). 
\end{equation}
\begin{lem}\label{lem3.4}
We have 
\[
[\includegraphics[width=6mm,height=2.3mm]{deg2-1.png} ,R_z]=[\raise -0.7mm \hbox{\includegraphics[width=3.6mm,height=4.2mm,clip]{deg2-2.png}} ,R_z]=0.
\]
\end{lem}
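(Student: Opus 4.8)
The statement to prove is that both degree-$2$ rooted tree maps commute with right-concatenation by $z=x+y$, i.e. $[\text{\raise -0.7mm \hbox{deg2-1}},R_z]=[\text{deg2-2},R_z]=0$. The natural strategy is to invoke Lemma~\ref{lem9}: a $\Q$-linear map annihilating $1$ and commuting with both $R_x$ and $R_y$ must vanish identically. We cannot apply this directly to \text{\raise -0.7mm \hbox{deg2-1}} or \text{deg2-2}, but we can apply it to the commutator maps $g:=[\,\text{\raise -0.7mm \hbox{deg2-1}},R_z\,]$ and $h:=[\,\text{deg2-2},R_z\,]$. So the plan is to show that each of $g$ and $h$ satisfies the three hypotheses of Lemma~\ref{lem9}, whence $g\equiv h\equiv 0$.

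First I would verify the annihilation of $1$. For any commutator $[f,R_z]$ we have $[f,R_z](1)=f(z)-f(1)z$; using the defining formula \eqref{eq20} (resp.\ \eqref{eq21}) with $w=1$ together with $f(1)=0$, this reduces to $f(z)-f(1)z=f(z)=f(x)+f(y)$, and the explicit values in \eqref{eq7} combined with the degree-$1$ identity \text{deg1}$(x)=-\text{deg1}(y)$ from \eqref{eq1} make this cancel. More cleanly, one should first establish that each commutator is itself an $R_u$-commuting object, since Lemma~\ref{lem9} is really about the \emph{second} commutators. Concretely, the cleanest route is to show $[[f,R_z],R_x]=[[f,R_z],R_y]=0$ and $[f,R_z](1)=0$; then Lemma~\ref{lem9} forces $[f,R_z]=0$.

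The core computation is therefore the Jacobi-type manipulation
\[
[[f,R_z],R_u]=[[f,R_u],R_z]+[f,[R_z,R_u]].
\]
Since right-concatenations commute with each other, $[R_z,R_u]=0$, so this collapses to $[[f,R_z],R_u]=[[f,R_u],R_z]$. Thus I need the inner commutators $[\,\text{\raise -0.7mm \hbox{deg2-1}},R_u\,]$ and $[\,\text{deg2-2},R_u\,]$ for $u\in\{x,y\}$, which I would read off from the definitions \eqref{eq20} and \eqref{eq21}: each is expressible through \text{deg1}$(u)$ and the lower-degree map \text{deg1}, and by the degree-$1$ results \eqref{eq1}, \eqref{eq3} these inner commutators are built from $R_y$, $R_x$, $\phi_{\text{deg1}}=\mathrm{id}$, and the already-established relation $[\text{deg1},R_z]=0$. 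Plugging these into $[[f,R_u],R_z]$ and using $[\text{deg1},R_z]=0$ repeatedly should make everything vanish.

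The main obstacle I anticipate is the bookkeeping in the inner commutators: the operator $R_yR_{y+z}R_y^{-1}$ appearing in the definition of \text{deg2-2} is not itself a single right-concatenation, so verifying $[[\text{deg2-2},R_z],R_u]=0$ requires carefully tracking how $R_{y+z}=R_y+R_z$ interacts with $R_u$ and with the inverse $R_y^{-1}$. I expect the key simplification to be that all the auxiliary operators are right-multiplications, hence mutually commuting, so that the only genuinely nontrivial input is the degree-$1$ commutation \eqref{eq1}; once that is exploited, the cancellation is forced and the two desired identities follow simultaneously from Lemma~\ref{lem9}.
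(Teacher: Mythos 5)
There is a genuine error at the heart of your plan: right-concatenation operators on $\FH$ do \emph{not} commute with one another. For words $a,b$ one has $R_aR_b(w)=wba$ while $R_bR_a(w)=wab$, so $[R_a,R_b]=0$ only when $ab=ba$; in particular
\[
[R_z,R_x](w)=wxz-wzx=wx(x+y)-w(x+y)x=wxy-wyx\neq 0.
\]
Consequently your collapse of the Jacobi identity, $[[f,R_z],R_u]=[[f,R_u],R_z]+[f,[R_z,R_u]]=[[f,R_u],R_z]$, is invalid: the discarded term $[f,[R_z,R_u]]$ does not vanish, and the later appeal to ``all the auxiliary operators are right-multiplications, hence mutually commuting'' inherits the same flaw. (The paper does use pairwise commutativity of $\phi_{f}$, $R_z$ and lower-degree tree maps, but that rests on $[\,\cdot\,,R_z]=0$ for tree maps and on $R_x+R_y=R_z$ by linearity --- never on $[R_x,R_y]=0$.) Your route is also roundabout: since $[f,R_z]=[f,R_x]+[f,R_y]$ and both inner commutators are directly computable from the defining recursion, there is nothing left for Lemma \ref{lem9} to do.

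Indeed, the statement has a one-line direct proof, which is what the paper does. Write $t$ for the degree-one tree map and $f$ for either degree-two map. By linearity of \eqref{eq20} (resp.\ \eqref{eq21}) in the last letter,
\[
[f,R_z](w)=f(wz)-f(w)z=c\,t(w)\,t(z)+w\,f(z)\qquad (c=2\text{ resp. }1),
\]
and both $t(z)=t(x)+t(y)=xy-xy=0$ and $f(z)=f(x)+f(y)=0$ (the latter from \eqref{eq7}), so the commutator vanishes identically. Lemma \ref{lem9} is reserved in the paper for commutators of two tree maps, where no such closed-form evaluation is available; here it is not needed, and in your setup it cannot be reached because the second-commutator computation it would require is exactly where the false commutativity enters.
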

\begin{proof}
For $w\in\FH$, we get by \eqref{eq20}
\[
[\includegraphics[width=6mm,height=2.3mm]{deg2-1.png} ,R_z](w)=\includegraphics[width=6mm,height=2.3mm]{deg2-1.png}(wz)-\includegraphics[width=6mm,height=2.3mm]{deg2-1.png}(w)z=\includegraphics[width=6mm,height=2.3mm]{deg2-1.png}(w)z+2\includegraphics[width=2.8mm,height=2.5mm]{deg1.png}(w)\includegraphics[width=2.8mm,height=2.5mm]{deg1.png}(z)
+w\includegraphics[width=6mm,height=2.3mm]{deg2-1.png}(z)-\includegraphics[width=6mm,height=2.3mm]{deg2-1.png}(w)z.
\]
Since $\includegraphics[width=6mm,height=2.3mm]{deg2-1.png}(z)=\includegraphics[width=2.8mm,height=2.5mm]{deg1.png}(z)=0$, this becomes $0$ and hence $[\includegraphics[width=6mm,height=2.3mm]{deg2-1.png} ,R_z]=0.$ The proof of $[\raise -0.7mm \hbox{\includegraphics[width=3.6mm,height=4.2mm,clip]{deg2-2.png}} ,R_z]=0$ goes similarly by using \eqref{eq21}. 
\end{proof}

Then we are allowed to define the maps associated to rooted forests of degree $2$ by
\[
\psi_{\includegraphics[width=4.5mm,height=1.8mm]{deg2-1.png}}:=\text{sgn}(u)[\includegraphics[width=6mm,height=2.3mm]{deg2-1.png} ,R_u]=[\includegraphics[width=6mm,height=2.3mm]{deg2-1.png} ,R_x],\quad \psi_{\raise -0.5mm \hbox{\includegraphics[width=2.7mm,height=3.1mm,clip]{deg2-2.png}}}:=\text{sgn}(u)[\raise -0.7mm \hbox{\includegraphics[width=3.6mm,height=4.2mm,clip]{deg2-2.png}} ,R_u]=[\raise -0.7mm \hbox{\includegraphics[width=3.6mm,height=4.2mm,clip]{deg2-2.png}} ,R_x].
\]
By the coproduct rules \eqref{eq2}, we calculate
\[
\psi_{\includegraphics[width=4.5mm,height=1.8mm]{deg2-1.png}}(w)=2\includegraphics[width=2.8mm,height=2.5mm]{deg1.png} (wx)y-wxzy,\quad \psi_{\raise -0.5mm \hbox{\includegraphics[width=2.7mm,height=3.1mm,clip]{deg2-2.png}}}(w)=\includegraphics[width=2.8mm,height=2.5mm]{deg1.png} (wx)y+wxzy, 
\]
and hence we have
\begin{equation}\label{eq4}
\psi_{\includegraphics[width=4.5mm,height=1.8mm]{deg2-1.png}}=R_y\phi_{\includegraphics[width=4.5mm,height=1.8mm]{deg2-1.png}}R_x,\quad \psi_{\raise -0.5mm \hbox{\includegraphics[width=2.7mm,height=3.1mm,clip]{deg2-2.png}}}=R_y\phi_{\raise -0.5mm \hbox{\includegraphics[width=2.7mm,height=3.1mm,clip]{deg2-2.png}}}R_x
\end{equation}
by putting
\[
\phi_{\includegraphics[width=4.5mm,height=1.8mm]{deg2-1.png}}=2\includegraphics[width=2.8mm,height=2.5mm]{deg1.png} -R_z,\quad \phi_{\raise -0.5mm \hbox{\includegraphics[width=2.7mm,height=3.1mm,clip]{deg2-2.png}}}=\includegraphics[width=2.8mm,height=2.5mm]{deg1.png} +R_z. 
\]
Notice that the property (c) in the Introduction holds for $f=\includegraphics[width=2.8mm,height=2.5mm]{deg1.png}$. These expressions and \eqref{eq1} implies that
\begin{equation}\label{eq10}
\phi_{\includegraphics[width=4.5mm,height=1.8mm]{deg2-1.png}},\phi_{\raise -0.5mm \hbox{\includegraphics[width=2.7mm,height=3.1mm,clip]{deg2-2.png}}}\in\Q[R_z,\includegraphics[width=2.8mm,height=2.5mm]{deg1.png}]_{(1)}
\end{equation}
by assuming the degree of $R_z$ to be $1$. Moreover, \eqref{eq4} implies that 
\begin{align}
\includegraphics[width=6mm,height=2.3mm]{deg2-1.png}(wu)=\includegraphics[width=6mm,height=2.3mm]{deg2-1.png}(w)u+\text{sgn}(u)\phi_{\includegraphics[width=4.5mm,height=1.8mm]{deg2-1.png}}(wx)y, \label{eq22} \\
\raise -0.7mm \hbox{\includegraphics[width=3.6mm,height=4.2mm,clip]{deg2-2.png}}(wu)=\raise -0.7mm \hbox{\includegraphics[width=3.6mm,height=4.2mm,clip]{deg2-2.png}}(w)u+\text{sgn}(u)\phi_{\raise -0.5mm \hbox{\includegraphics[width=2.7mm,height=3.1mm,clip]{deg2-2.png}}}(wx)y \label{eq23}
\end{align}
for $w\in\FH$ and $u\in\{x,y\}$, and in particular
\begin{equation}\label{eq24}
\includegraphics[width=6mm,height=2.3mm]{deg2-1.png}(\Q\cdot x+\FH^1), \raise -0.7mm \hbox{\includegraphics[width=3.6mm,height=4.2mm,clip]{deg2-2.png}}(\Q\cdot x+\FH^1)\subset\FH y 
\end{equation}
because of $\includegraphics[width=6mm,height=2.3mm]{deg2-1.png}(1)=\raise -0.7mm \hbox{\includegraphics[width=3.6mm,height=4.2mm,clip]{deg2-2.png}}(1)=0$. We also find the following. 
\begin{lem}\label{lem3.5}
$\includegraphics[width=6mm,height=2.3mm]{deg2-1.png}(\Q\cdot x+\Q\cdot y+\FH^0), \raise -0.7mm \hbox{\includegraphics[width=3.6mm,height=4.2mm,clip]{deg2-2.png}}(\Q\cdot x+\Q\cdot y+\FH^0)\subset x\FH y$.
\end{lem}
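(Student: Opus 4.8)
Throughout, let $f$ stand for either of the two degree-$2$ maps, $\includegraphics[width=6mm,height=2.3mm]{deg2-1.png}$ or $\raise -0.7mm \hbox{\includegraphics[width=3.6mm,height=4.2mm,clip]{deg2-2.png}}$, and recall from \eqref{eq22} and \eqref{eq23} that both obey a recursion of the uniform shape $f(wu)=f(w)u+\sgn(u)\,\phi_f(wx)y$ with $\phi_f$ equal respectively to $2\,\includegraphics[width=2.8mm,height=2.5mm]{deg1.png}-R_z$ or $\includegraphics[width=2.8mm,height=2.5mm]{deg1.png}+R_z$. The plan is to obtain the asserted inclusion as the intersection of two one-sided inclusions, exactly as in Lemma~\ref{lem3.2}. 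One half is free: since $\Q\cdot x+\Q\cdot y+\FH^0\subset\Q\cdot x+\FH^1$, the membership $f(\Q\cdot x+\Q\cdot y+\FH^0)\subset\FH y$ is already recorded in \eqref{eq24}. It then remains to prove $f(\Q\cdot x+\Q\cdot y+\FH^0)\subset x\FH$, for then the lemma follows from the identity $x\FH\cap\FH y=x\FH y$, which holds because a monomial lying in both $x\FH$ and $\FH y$ begins with $x$ and ends with $y$, hence has length at least $2$ and is of the form $xvy$.

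The core of the argument is the auxiliary claim that each of $R_z$, $\includegraphics[width=2.8mm,height=2.5mm]{deg1.png}$ and the two degree-$2$ maps carries the left ideal $x\FH$ into itself. For $R_z$, $R_y$ and $R_y^{-1}$ this is immediate, since right concatenation does not touch the leading letter. For $\includegraphics[width=2.8mm,height=2.5mm]{deg1.png}$ I would argue by induction on the length of a word $w\in x\FH$ using \eqref{eq16}: writing $w=w'u$ with $w'\in x\FH$, the first term $\includegraphics[width=2.8mm,height=2.5mm]{deg1.png}(w')u$ lies in $x\FH$ by the inductive hypothesis, while the correction $\sgn(u)\,(w'x)y$ (here $\phi$ for the single node is the identity) lies in $x\FH$ because $w'x\in x\FH$; the base case is $\includegraphics[width=2.8mm,height=2.5mm]{deg1.png}(x)=xy$. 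Since $\phi_f$ is in either case a $\Q$-linear combination of $\includegraphics[width=2.8mm,height=2.5mm]{deg1.png}$ and $R_z$, it too preserves $x\FH$; substituting this into the uniform recursion and running the same length induction then yields the claim for $f$, the base cases being $f(x)=\includegraphics[width=2.8mm,height=2.5mm]{deg1.png}(\includegraphics[width=2.8mm,height=2.5mm]{deg1.png}(x))$ and $f(x)=R_yR_{y+z}R_y^{-1}\includegraphics[width=2.8mm,height=2.5mm]{deg1.png}(x)$ of \eqref{eq7}, both visibly in $x\FH$.

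Granting the auxiliary claim, the reduction is routine. By $\Q$-linearity it suffices to check the monomial generators of $\Q\cdot x+\Q\cdot y+\FH^0$: a constant is annihilated; the element $x$ and every word $xvy\in\FH^0$ already lie in $x\FH$, so their images do too by the claim; and for $y$ I write $y=z-x$ and invoke $f(z)=0$, established in the proof of Lemma~\ref{lem3.4}, to get $f(y)=-f(x)\in x\FH$. This gives $f(\Q\cdot x+\Q\cdot y+\FH^0)\subset x\FH$, which together with \eqref{eq24} and $x\FH\cap\FH y=x\FH y$ finishes the proof. The step I expect to be the real obstacle is the auxiliary claim: the naive induction phrased only on $\FH^0$ does not close up, because the recursion simultaneously strips a trailing letter and appends an internal $x$ through $\phi_f(wx)$, producing intermediate words that leave $\FH^0$; the fix is to strengthen the inductive hypothesis to preservation of the entire left ideal $x\FH$, which is stable under all the maps in sight.
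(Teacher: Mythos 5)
Your proposal is correct and follows essentially the same route as the paper: the one-sided inclusion into $\FH y$ is taken from \eqref{eq24}, the inclusion into $x\FH$ is proved by induction on word length via the recursions \eqref{eq22} and \eqref{eq23}, and the two are intersected. The only difference is that you spell out the induction in full (strengthening the hypothesis to preservation of the whole left ideal $x\FH$, which is indeed the right way to make the paper's terse "induction on the length of a word $w\in\FH^0$" close up), so no further changes are needed.
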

\begin{proof}
Using \eqref{eq22} and \eqref{eq23}, we have $f(\Q\cdot x+\Q\cdot y+\FH^0)\subset x\FH$, where $f=\includegraphics[width=6mm,height=2.3mm]{deg2-1.png}$ or \raise -0.7mm \hbox{\includegraphics[width=3.6mm,height=4.2mm,clip]{deg2-2.png}}, by induction on the length of a word $w\in\FH^0$. We have already obtained \eqref{eq24}, and hence the lemma holds. 
\end{proof}

Let $f$ be \includegraphics[width=6mm,height=2.3mm]{deg2-1.png} or \raise -0.7mm \hbox{\includegraphics[width=3.6mm,height=4.2mm,clip]{deg2-2.png}}. Because of
\[
[[f ,\includegraphics[width=2.8mm,height=2.5mm]{deg1.png} ],R_u]=-[[\includegraphics[width=2.8mm,height=2.5mm]{deg1.png} ,R_u],f]-[[R_u, f, \includegraphics[width=2.8mm,height=2.5mm]{deg1.png}],
\]
\eqref{eq3} and \eqref{eq4}, we see that
\begin{align}\label{eq5}
-\text{sgn}(u)[[f ,\includegraphics[width=2.8mm,height=2.5mm]{deg1.png} ],R_u]&=R_y\phi_{\includegraphics[width=2mm,height=1.8mm]{deg1.png}}[R_x, f]+R_y[\phi_{\includegraphics[width=2mm,height=1.8mm]{deg1.png}}, f]R_x+[R_y, f]\phi_{\includegraphics[width=2mm,height=1.8mm]{deg1.png}} R_x \\
&\quad -R_y\phi_f [R_x, \includegraphics[width=2.8mm,height=2.5mm]{deg1.png}]-R_y[\phi_f , \includegraphics[width=2.8mm,height=2.5mm]{deg1.png}]R_x-[R_y, \includegraphics[width=2.8mm,height=2.5mm]{deg1.png}]\phi_f R_x. \nonumber
\end{align}
But since we have already obtained $[\phi_{\includegraphics[width=2mm,height=1.8mm]{deg1.png}}, f]=0$ and $[\phi_f, \includegraphics[width=2.8mm,height=2.5mm]{deg1.png}]=0$ by \eqref{eq10}, we have
\begin{align*}
\eqref{eq5}&=-R_y\phi_{\includegraphics[width=2mm,height=1.8mm]{deg1.png}}\psi_f +\psi_f\phi_{\includegraphics[width=2mm,height=1.8mm]{deg1.png}}R_x+R_y\phi_f\psi_{\includegraphics[width=2mm,height=1.8mm]{deg1.png}}-\psi_{\includegraphics[width=2mm,height=1.8mm]{deg1.png}}\phi_f R_x \\
&=-R_y\phi_{\includegraphics[width=2mm,height=1.8mm]{deg1.png}}R_y\phi_f R_x+R_y\phi_f R_x\phi_{\includegraphics[width=2mm,height=1.8mm]{deg1.png}}R_x+R_y\phi_f R_y\phi_{\includegraphics[width=2mm,height=1.8mm]{deg1.png}}R_x-R_y \phi_{\includegraphics[width=2mm,height=1.8mm]{deg1.png}}R_x\phi_f R_x \\
&=-R_y\phi_{\includegraphics[width=2mm,height=1.8mm]{deg1.png}}R_z\phi_f R_x+R_y\phi_f R_z\phi_{\includegraphics[width=2mm,height=1.8mm]{deg1.png}}R_x. 
\end{align*}
This becomes $0$ since the maps $\phi_{\includegraphics[width=2mm,height=1.8mm]{deg1.png}}, \phi_f , R_z$ are commutative pairwise. By \eqref{eq7}, we see that $ f(1)=0$. Hence by Lemma \ref{lem9}, we have
\begin{equation}\label{eq8}
[\includegraphics[width=6mm,height=2.3mm]{deg2-1.png} , \includegraphics[width=2.8mm,height=2.5mm]{deg1.png}]=[\raise -0.7mm \hbox{\includegraphics[width=3.6mm,height=4.2mm,clip]{deg2-2.png}} , \includegraphics[width=2.8mm,height=2.5mm]{deg1.png}]=0. 
\end{equation}

Similarly, because of
\[
[[\includegraphics[width=6mm,height=2.3mm]{deg2-1.png} , \raise -0.7mm \hbox{\includegraphics[width=3.6mm,height=4.2mm,clip]{deg2-2.png}}],R_u]=-[[\raise -0.7mm \hbox{\includegraphics[width=3.6mm,height=4.2mm,clip]{deg2-2.png}} ,R_u], \includegraphics[width=6mm,height=2.3mm]{deg2-1.png}]-[[R_u, \includegraphics[width=6mm,height=2.3mm]{deg2-1.png}], \raise -0.7mm \hbox{\includegraphics[width=3.6mm,height=4.2mm,clip]{deg2-2.png}}]
\]
and \eqref{eq4}, we see that
\begin{align}\label{eq9}
-\text{sgn}(u)[[\includegraphics[width=6mm,height=2.3mm]{deg2-1.png} , \raise -0.7mm \hbox{\includegraphics[width=3.6mm,height=4.2mm,clip]{deg2-2.png}}],R_u]&=R_y\phi_{\raise -0.5mm \hbox{\includegraphics[width=2.7mm,height=3.1mm,clip]{deg2-2.png}}}[R_x, \includegraphics[width=6mm,height=2.3mm]{deg2-1.png}]+R_y[\phi_{\raise -0.5mm \hbox{\includegraphics[width=2.7mm,height=3.1mm,clip]{deg2-2.png}}}, \includegraphics[width=6mm,height=2.3mm]{deg2-1.png}]R_x+[R_y, \includegraphics[width=6mm,height=2.3mm]{deg2-1.png}]\phi_{\raise -0.5mm \hbox{\includegraphics[width=2.7mm,height=3.1mm,clip]{deg2-2.png}}}R_x \\
&\quad -R_y\phi_{\includegraphics[width=4.5mm,height=1.8mm]{deg2-1.png}}[R_x, \raise -0.7mm \hbox{\includegraphics[width=3.6mm,height=4.2mm,clip]{deg2-2.png}}]-R_y[\phi_{\includegraphics[width=4.5mm,height=1.8mm]{deg2-1.png}}, \raise -0.7mm \hbox{\includegraphics[width=3.6mm,height=4.2mm,clip]{deg2-2.png}}]R_x-[R_y, \raise -0.7mm \hbox{\includegraphics[width=3.6mm,height=4.2mm,clip]{deg2-2.png}}]\phi_{\includegraphics[width=4.5mm,height=1.8mm]{deg2-1.png}}R_x. \nonumber
\end{align}
By \eqref{eq10}, \eqref{eq8} and Lemma \ref{lem3.4}, we have
\[
[\phi_{\raise -0.5mm \hbox{\includegraphics[width=2.7mm,height=3.1mm,clip]{deg2-2.png}}}, \includegraphics[width=6mm,height=2.3mm]{deg2-1.png}]=[\phi_{\includegraphics[width=4.5mm,height=1.8mm]{deg2-1.png}}, \raise -0.7mm \hbox{\includegraphics[width=3.6mm,height=4.2mm,clip]{deg2-2.png}}]=0, 
\]
and hence
\begin{align*}
\eqref{eq9}&=-R_y\phi_{\raise -0.5mm \hbox{\includegraphics[width=2.7mm,height=3.1mm,clip]{deg2-2.png}}}\psi_{\includegraphics[width=4.5mm,height=1.8mm]{deg2-1.png}}+\psi_{\includegraphics[width=4.5mm,height=1.8mm]{deg2-1.png}}\phi_{\raise -0.5mm \hbox{\includegraphics[width=2.7mm,height=3.1mm,clip]{deg2-2.png}}}R_x+R_y\phi_{\includegraphics[width=4.5mm,height=1.8mm]{deg2-1.png}}\psi_{\raise -0.5mm \hbox{\includegraphics[width=2.7mm,height=3.1mm,clip]{deg2-2.png}}}-\psi_{\raise -0.5mm \hbox{\includegraphics[width=2.7mm,height=3.1mm,clip]{deg2-2.png}}}\phi_{\includegraphics[width=4.5mm,height=1.8mm]{deg2-1.png}}R_x \\
&=-R_y\phi_{\raise -0.5mm \hbox{\includegraphics[width=2.7mm,height=3.1mm,clip]{deg2-2.png}}}R_y\phi_{\includegraphics[width=4.5mm,height=1.8mm]{deg2-1.png}}R_x+R_y\phi_{\includegraphics[width=4.5mm,height=1.8mm]{deg2-1.png}}R_x\phi_{\raise -0.5mm \hbox{\includegraphics[width=2.7mm,height=3.1mm,clip]{deg2-2.png}}}R_x+R_y\phi_{\includegraphics[width=4.5mm,height=1.8mm]{deg2-1.png}}R_y\phi_{\raise -0.5mm \hbox{\includegraphics[width=2.7mm,height=3.1mm,clip]{deg2-2.png}}}R_x-R_y\phi_{\raise -0.5mm \hbox{\includegraphics[width=2.7mm,height=3.1mm,clip]{deg2-2.png}}}R_x\phi_{\includegraphics[width=4.5mm,height=1.8mm]{deg2-1.png}}R_x \\
&=-R_y\phi_{\raise -0.5mm \hbox{\includegraphics[width=2.7mm,height=3.1mm,clip]{deg2-2.png}}}R_z\phi_{\includegraphics[width=4.5mm,height=1.8mm]{deg2-1.png}}R_x+R_y\phi_{\includegraphics[width=4.5mm,height=1.8mm]{deg2-1.png}}R_z\phi_{\raise -0.5mm \hbox{\includegraphics[width=2.7mm,height=3.1mm,clip]{deg2-2.png}}}R_x, 
\end{align*}
which becomes $0$ since $\phi_{\includegraphics[width=4.5mm,height=1.8mm]{deg2-1.png}},\phi_{\raise -0.5mm \hbox{\includegraphics[width=2.7mm,height=3.1mm,clip]{deg2-2.png}}},R_z$ are commutative pairwise. Since $[\includegraphics[width=6mm,height=2.3mm]{deg2-1.png} , \raise -0.7mm \hbox{\includegraphics[width=3.6mm,height=4.2mm,clip]{deg2-2.png}}](1)=0$, we have
\[
[\includegraphics[width=6mm,height=2.3mm]{deg2-1.png} , \raise -0.7mm \hbox{\includegraphics[width=3.6mm,height=4.2mm,clip]{deg2-2.png}}]=0
\]
by Lemma \ref{lem9}. 
\begin{prop}
We have
\begin{align*}
\includegraphics[width=6mm,height=2.3mm]{deg2-1.png} (vw)&=\includegraphics[width=6mm,height=2.3mm]{deg2-1.png} (v)w+2\includegraphics[width=2.8mm,height=2.5mm]{deg1.png} (v)\includegraphics[width=2.8mm,height=2.5mm]{deg1.png} (w)+v\includegraphics[width=6mm,height=2.3mm]{deg2-1.png} (w), \\
\raise -0.7mm \hbox{\includegraphics[width=3.6mm,height=4.2mm,clip]{deg2-2.png}} (vw)&=\raise -0.7mm \hbox{\includegraphics[width=3.6mm,height=4.2mm,clip]{deg2-2.png}} (v)w+\includegraphics[width=2.8mm,height=2.5mm]{deg1.png} (v)\includegraphics[width=2.8mm,height=2.5mm]{deg1.png} (w)+v\raise -0.7mm \hbox{\includegraphics[width=3.6mm,height=4.2mm,clip]{deg2-2.png}} (w)
\end{align*}
for any $v,w\in\FH$. 
\end{prop}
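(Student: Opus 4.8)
The plan is to prove both identities at once by induction on the length of the right-hand factor $w$, keeping $v\in\FH$ arbitrary throughout. For brevity let $f$ denote either of the two degree-$2$ maps \includegraphics[width=6mm,height=2.3mm]{deg2-1.png} and \raise -0.7mm \hbox{\includegraphics[width=3.6mm,height=4.2mm,clip]{deg2-2.png}}, and abbreviate the single-node map \includegraphics[width=2.8mm,height=2.5mm]{deg1.png} by $\tau$; the two cases differ only in the numerical coefficient $c_f$ on the middle term ($c_f=2$ and $c_f=1$, respectively), which is exactly the integer appearing in the coproducts \eqref{eq2}. Thus the assertion to be shown is
\[
f(vw)=f(v)w+c_f\,\tau(v)\tau(w)+v\,f(w),
\]
i.e.\ the full multiplicativity $f(vw)=M(\Delta(f)(v\otimes w))$ promised in the Introduction, extending the one-letter recursions \eqref{eq20} and \eqref{eq21} from a single right letter $u$ to an arbitrary word $w$.

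For the base case $w=1$ I use $f(1)=0$ and $\tau(1)=0$: the left side is $f(v)$ and the right side collapses to $f(v)\cdot 1+c_f\,\tau(v)\tau(1)+v\,f(1)=f(v)$. For the inductive step I write $w=w'u$ with $u\in\{x,y\}$ and $w'$ strictly shorter, so that $vw=(vw')u$. Applying the defining relation \eqref{eq20} (resp.\ \eqref{eq21}) peels the last letter off:
\[
f(vw)=f(vw')\,u+c_f\,\tau(vw')\,\tau(u)+(vw')\,f(u).
\]
Now I expand $f(vw')$ by the induction hypothesis (valid since $w'$ is shorter and $v$ is arbitrary) and $\tau(vw')$ by the degree-$1$ Leibniz rule already established as the final Proposition of Subsection \ref{subsec3.1}, namely $\tau(vw')=\tau(v)w'+v\,\tau(w')$.

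Substituting these two expansions produces six monomials in $v$, $w'$, $u$, $\tau$ and $f$. The only real work — and it is routine bookkeeping rather than a genuine obstacle — is to regroup them into the target shape. Concretely, the terms $f(v)w'u$, $c_f\,\tau(v)\tau(w')u$ and $c_f\,\tau(v)w'\tau(u)$ reassemble as $f(v)w+c_f\,\tau(v)\tau(w)$ once one invokes the Leibniz rule $\tau(w'u)=\tau(w')u+w'\tau(u)$ to recognise $\tau(w)$; and the remaining terms $v\,f(w')u$, $c_f\,v\,\tau(w')\tau(u)$ and $v\,w'f(u)$ collapse to $v\,f(w)$ via the one-letter recursion \eqref{eq20} (resp.\ \eqref{eq21}) applied now to $f(w'u)$. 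This closes the induction. The argument runs uniformly for both forests precisely because the same constant $c_f$ multiplies the $\tau\otimes\tau$ contribution on the $v$-side and on the $w$-side, and because $\tau$ is a derivation, so that its values on $vw'$ and on $w'u$ are both governed by the single degree-$1$ Leibniz rule.
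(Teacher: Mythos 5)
Your proof is correct and is essentially the paper's own argument, which the author compresses to one line: induction on the length (degree) of the word $w$, using the one-letter recursions \eqref{eq20}, \eqref{eq21} together with the Leibniz rule for the degree-one map, and regrouping exactly as you describe. One cosmetic caveat: the paper later reserves the symbol $\tau$ for the duality anti-automorphism in Section \ref{sec4}, so your local abbreviation for the single-node map should be renamed if this text were incorporated.
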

\begin{proof}
By \eqref{eq2}, \eqref{eq20} and \eqref{eq21}, we obtain the proposition by induction on the degree of a word $w$. 
\end{proof}

\subsection{General degree}\label{subsec3.3}
Suppose that we have constructed the rooted tree (or forest) maps which degrees are less than $n$. Moreover we assume (a), (b), (d) and (e) in the Introduction for any rooted forest maps $f,g$ each of which degrees is less than $n$. We construct all of the rooted forest maps of degree $n$ and show that they satisfy (a), (b), (d) and (e). 

For any rooted forest $f$ with $\deg f=n>1$, $w\in\FH\backslash\Q$ and $u\in\{x,y\}$, we define
\begin{equation}\label{eq25}
f(wu):=M(\Delta (f)(w\otimes u)). 
\end{equation}
We also define, for $u\in\{x,y\}$, 
\[
f(u):=R_yR_{y+z}R_y^{-1}g(u)
\]
if $f$ is a tree and $f=B_+(g)$, or otherwise
\[
f(u):=g(h(u)),
\]
where $f=gh$ with non-empty rooted forests  $g$ and $h$. We notice that, in the case of $f=B_+(g)$, the definition of $f(u)$ makes sense because $g(\Q\cdot x+\Q\cdot y +\FH^0)\subset x\FH y$. 

By definition, it follows that $f(x)=-f(y)$, or equivalently $f(z)=0$. For any rooted forest of degree $<n$, we have obtained the same property. Hence we are allowed to define the map
\[
\psi_f:=\text{sgn}(u)[f,R_u]=[f,R_x]
\]
for $u\in\{x,y\}$. 

\vspace{10pt}

{\bf Case I:} \underline{$f$ is a tree, i.e. $f=B_+(g)$.}

By \eqref{coprod} and using Sweedler notation $\Delta (g)=\sum a\otimes b$, we find
\begin{align}\label{eq11}
\psi_f (w)&=\psi_{B_+(g)}(w)=B_+(g)(wx)-B_+(g)(w)x=M(((id \otimes B_+)\circ\Delta)(g)(w\otimes x)) \nonumber \\
&=g(w)xy+\sum_{b\neq\I}a(w)R_yR_{y+z}R_y^{-1}b(x).
\end{align}
Note that, for the last equality, we use $B_+(\I)=\includegraphics[width=2.8mm,height=2.5mm]{deg1.png}$ and $B_+(f)=R_yR_{y+z}R_y^{-1}f$ for $f\neq\I$. 
Since $a(w)x=R_x a(w)=(aR_x-\psi_a)(w)=(a-R_y\phi_a)(wx)$, $b(x)\in x\FH y$ and again $\Delta (g)=\sum a\otimes b$, 
\begin{align*}
\eqref{eq11}&=g(wx)y-\sum_{b\neq\I}a(w)b(x)y+\sum_{b\neq\I}(a-R_y\phi_a)(wx)L_x^{-1}R_yR_{y+z}R_y^{-1}b(x) \\
&=g(wx)y+\sum_{b\neq\I}(a-R_y\phi_a)(wx)L_x^{-1}(R_y^{-1}b(x))zy.
\end{align*}
Therefore we obtain $\psi_f=R_y\phi_f R_x$, where
\begin{equation}\label{eq12}
\phi_f =g+R_z\sum_{b\neq\I}R_{L_x^{-1}R_y^{-1}b(x)}(a-R_y\phi_a). 
\end{equation}

On the other hand, we find
\begin{align*}
\psi_g (w)&=\sum_{a\neq g}a(w)b(x)=\sum_{a\neq g}a(w)xL_x^{-1}b(x) \\
&=\sum_{a\neq g}(a-R_y\phi_a)(wx)L_x^{-1}b(x)
\end{align*}
and hence by putting
\begin{equation}\label{eq13}
\phi_g =\sum_{a\neq g}R_{L_x^{-1}R_y^{-1}b(x)}(a-R_y\phi_a), 
\end{equation}
we find $\psi_g=R_y\phi_g R_x$. 
Obviously, the condition $a\neq g$ is equivalent to the condition $b\neq\I$. Combining \eqref{eq12} and \eqref{eq13}, we have
\[
\phi_f =g+R_z\phi_g .
\]
(This is (c) in the Introduction.) This in particular asserts that $\phi_f\in\Q[R_z,\mathcal{T}_{n-1}]_{(n-1)}$, where $\mathcal{T}_{n-1}$ stands for the set of all rooted trees of degree $\leq n-1$. 

\vspace{15pt}

{\bf Case II:} \underline{$f$ is not a tree, i.e. $f=gh$ with $g,h\neq\I$.}

By easy calculation we find
\[
\psi_{gh}=g\psi_h+\psi_g h. 
\]
Since $\psi_g=R_y\phi_g R_x$ and $\psi_h=R_y\phi_h R_x$, we have
\begin{align*}
\psi_{gh}&=g R_y\phi_h R_x+R_y\phi_g R_x h \\
&=(R_y g-\psi_g)\phi_h R_x +R_y\phi_g (h R_x -\psi_h) \\
&=R_y (g\phi_h +\phi_g h -\phi_g R_z \phi_h)R_x. 
\end{align*}
Therefore we obtain $\psi_f=R_y\phi_f R_x$, where
\[
\phi_f =g\phi_h +\phi_g h -\phi_g R_z \phi_h .
\]
This in particular asserts again that $\phi_f\in\Q[R_z,\mathcal{T}_{n-1}]_{(n-1)}$. 

\vspace{15pt}

Therefore, for any rooted forest of degree $n$, we obtain (a) and (d) in the Introduction. For any rooted forest $f$ of degree $n$, we see that $f(1)=0$. Thereby we also have (b) in the Introduction by induction on a degree of a word in $\FH$. (The proof goes similar to Lemma \ref{lem3.2} and \ref{lem3.5}.)

Now the only we have to show is (e) in the Introduction for any rooted forests $f,g$ of degree $\leq n$. For rooted forests $f$ and $g$, we have
\[
[[f,g],R_u]=-[[g,R_u],f]-[[R_u,f],g]. 
\]
If $\deg f, \deg g\leq n$, because of this and (a), we see that
\begin{align}
-\text{sgn}(u)[[f,g],R_u]&=[\psi_f,g]-[\psi_g,f] \nonumber \\
&=R_y\phi_f[R_x,g]+R_y[\phi_f,g]R_x+[R_y,g]\phi_f R_x \nonumber \\
&\quad -R_y\phi_g[R_x,f]-R_y[\phi_g,f]R_x-[R_y,f]\phi_g R_x. \label{eq15}
\end{align}

If $f=\includegraphics[width=2.8mm,height=2.5mm]{deg1.png}$, then $\phi_f=id$ and hence
\[
[\phi_f,g]=0. 
\]
Since $\phi_g\in\Q[R_z,\mathcal{T}_{n-1}]_{(n-1)}$, we also have
\[
[\phi_g,f]=0. 
\]
Thus
\begin{align*}
\eqref{eq15}&=-R_y\psi_g+\psi_g R_x+R_y \phi_g\psi_f-\psi_f\phi_g R_x \\
&=-R_yR_z\phi_g R_x+R_y\phi_g R_z R_x=0. 
\end{align*}
Since $[f,g](1)=0$, we conclude $[f,g]=0$ by Lemma \ref{lem9}. 

Assume that $[f,g]=0$ holds for rooted forests $f,g$ of $\deg g=n$ and $\deg f\leq i$ with $1\leq i< n$. Then, for a rooted forest $f$ with $\deg f=i+1$, we have
\[
[\phi_f,g]=0,\quad [\phi_g,f]=0 
\]
because of (d): $\phi_f\in\Q [R_z,\mathcal{T}_i]_{(i)}, \phi_g\in\Q [R_z,\mathcal{T}_{n-1}]_{(n-1)}$. Thus
\begin{align*}
\eqref{eq15}&=-R_y\phi_f\psi_g+\psi_g\phi_f R_x+R_y \phi_g\psi_f-\psi_f\phi_g R_x \\
&=-R_y\phi_f R_z\phi_g R_x+R_y\phi_g R_z\phi_f R_x=0. 
\end{align*}
Since $[f,g](1)=0$, we conclude $[f,g]=0$ by Lemma \ref{lem9}. Thus we conclude (e), the commutativity property, for any rooted forests $f,g$ of degree $\leq n$. 
\begin{prop}
We have $f(vw)=M(\Delta (f)(v\otimes w))$ for any rooted forest map $f$ of defree $n$ and any $v,w\in\FH$. 
\end{prop}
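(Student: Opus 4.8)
We have $f(vw)=M(\Delta(f)(v\otimes w))$ for any rooted forest map $f$ of degree $n$ and any $v,w\in\FH$.

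The plan is to prove this by induction on the length of the word $w$, exactly mirroring the degree-$0$, degree-$1$, and degree-$2$ propositions already established in Subsections \ref{subsec3.1} and \ref{subsec3.2}. By $\Q$-linearity it suffices to treat the case where $v$ and $w$ are words in $\{x,y\}$. The base case $w=1$ reduces to $f(v)=M(\Delta(f)(v\otimes 1))$; since $\Delta(f)=f\otimes\I+(\text{lower-order terms in the second slot})$ and every rooted forest map kills no constant issues here, the counit-type normalization $\I$ acts as the identity and the claim holds because all terms $a\otimes b$ with $b\neq\I$ contribute $M(a(v)\otimes b(1))=a(v)\,b(1)=0$ (each such $b$ is a forest map of positive degree, so $b(1)=0$), leaving only $f(v)\cdot\I=f(v)$. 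This same observation secures the base case.

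For the inductive step, write $w=w'u$ with $u\in\{x,y\}$ and $w'$ a shorter word. The defining relation \eqref{eq25} gives $f(vw)=f((vw')u)=M(\Delta(f)((vw')\otimes u))$, so I must show that the right-hand side equals $M(\Delta(f)(v\otimes w))$. Writing $\Delta(f)=\sum a\otimes b$ in Sweedler notation, the target identity becomes
\[
\sum a(vw')\,b(u)=\sum a(v)\,b(w'u).
\]
The key step is to expand each $b(w'u)$ on the right using \eqref{eq25} applied to the forest map $b$ (which has degree strictly less than $n$, or equals $\I$), namely $b(w'u)=M(\Delta(b)(w'\otimes u))$, and then to invoke coassociativity of $\Delta$ (the Proposition in Subsection \ref{subsec2.4}) to rewrite $(id\otimes\Delta)\circ\Delta(f)=(\Delta\otimes id)\circ\Delta(f)$. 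Feeding the induction hypothesis for the lower-degree maps $a$ on the left slot — $a(vw')=M(\Delta(a)(v\otimes w'))$ — transforms the left-hand sum into the same iterated coproduct applied to $v\otimes w'\otimes u$. Matching the two sides then reduces precisely to the coassociativity identity, so the two bracketings of $\Delta$ produce the same result.

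The main obstacle I anticipate is purely bookkeeping: carefully tracking the Sweedler sum through the two applications of \eqref{eq25} and confirming that the map $M$ composed with coassociativity yields exactly the desired regrouping, together with checking that the degree of each component map ($a$ on one side, $b$ on the other) is genuinely smaller than $n$ so that the induction hypothesis and the already-proved relation \eqref{eq25} both apply. Because $\Delta$ is graded and $\deg a+\deg b=\deg f=n$ with neither $a$ nor $b$ equal to $f$ itself in the nontrivial Sweedler terms, every factor arising in the expansion has degree $<n$, so both hypotheses are legitimately available. The argument is then a formal consequence of coassociativity and the definition \eqref{eq25}, with no new analytic input required beyond the induction already set up in the preceding subsections.
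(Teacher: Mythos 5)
Your proposal is correct and follows essentially the same route as the paper, whose entire proof is the one-line remark that the claim follows from \eqref{eq25} by induction on the degree (length) of the word $w$; your write-up simply makes explicit the coassociativity of $\Delta$ and the vanishing $b(1)=0$ that this induction silently relies on. The only loose phrase is the claim that every component map has degree $<n$ — in the term $f\otimes\I$ the left factor is $f$ itself, but that case is exactly what your word-length induction hypothesis covers, so the argument stands.
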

\begin{proof}
By \eqref{eq25}, we obtain the proposition by induction on the degree of a word $w$. 
\end{proof}

As a consequence of this section, we have Theorem \ref{mthm1} and \ref{mthm2}.


\section{Application to MZV's}\label{sec4}

In this section we show that rooted tree (or forest) maps constructed in the previous section induce a class of relation among MZV's. 
This will be done by use of Kawashima relation, which we recall in the following.

\subsection{Kawashima relation}\label{subsec4.1}
Let $z_k:=x^{k-1}y$ for $k\ge 1$. The harmonic (or stuffle) product $\ast :\FH^1\times\FH^1\to\FH^1$ is a $\Q$-bilinear map defined by the following rules. 
\begin{align*}
\mathrm{i})& \quad\text{For any }w\in\FH^1,\ 1\ast w=w\ast 1=w. \\
\mathrm{ii})&  \quad\text{For any }w, w^{\prime}\in\FH^1\text{ and any }k,l\ge 1, \\
{}& \quad z_kw\ast z_lw^{\prime}=z_k(w\ast z_lw^{\prime})+z_l(z_kw\ast w^{\prime})+z_{k+l}(w\ast w^{\prime}). 
\end{align*}
This is, as shown in \cite{H}, an associative and commutative product on $\FH^1$. 

Denote by $\varphi$ an automorphism of $\FH$ defined by $\varphi(x)=z=x+y$ and $\varphi(y)=-y$. 
The linear part of Kawashima's relation \cite[Corollary 4.9]{Kaw} is then stated as follows. 

\begin{prop}\label{linK}
{\it $L_x\varphi(\FH y\ast\FH y)\subset \ker Z$.}
\end{prop}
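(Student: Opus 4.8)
The plan is to recast the statement as a concrete vanishing of a linear combination of MZV's and then prove that vanishing through the analytic mechanism behind Kawashima's relation. By $\Q$-bilinearity of $\ast$ and $\Q$-linearity of $L_x,\varphi,Z$, it suffices to take $w=w_1\ast w_2$ with $w_1,w_2\in\FH y$ words. First I would check that $L_x\varphi$ really maps into the domain $\FH^0=\Q+x\FH y$ on which $Z$ is defined: since $\varphi(y)=-y$ and $\varphi$ is an algebra map, $\varphi(\FH y)\subset\FH y$, and $\FH y\ast\FH y\subset\FH y$ (the harmonic product preserves the augmentation ideal), so $\varphi(w)\in\FH y$ and $L_x\varphi(w)\in x\FH y\subset\FH^0$. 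Thus $Z(L_x\varphi(w))$ is a genuine $\R$-linear combination of MZV's, and the goal is to prove it is $0$.

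The combinatorial heart is to linearize the harmonic product by truncated multiple harmonic sums. For a word $w=z_{k_1}\cdots z_{k_r}$ and $n\ge 0$ set
\[
H_n(w)=\sum_{n\ge m_1>\cdots>m_r\ge 1}\frac{1}{m_1^{k_1}\cdots m_r^{k_r}},\qquad H_n(1)=1 .
\]
A short induction on weight, using $H_n(z_k w')=\sum_{m=1}^{n}m^{-k}H_{m-1}(w')$, shows that $w\mapsto(H_n(w))_{n\ge 0}$ is an algebra homomorphism from $(\FH^1,\ast)$ to sequences under pointwise product, so that $H_n(w_1\ast w_2)=H_n(w_1)\,H_n(w_2)$ for every $n$. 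This is the only place the factorization $w=w_1\ast w_2$ is used.

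Next I would upgrade this pointwise identity to an identity of honest functions. The aim is to attach to each $u\in\FH^1$ a one-variable ``Kawashima function'' $f_u(t)$ such that $f_{w_1\ast w_2}=f_{w_1}f_{w_2}$, i.e. the harmonic product becomes ordinary multiplication, and such that $Z(L_x\varphi(u))$ is recovered as a boundary value $\lim_{t\to 1^-}$ of an integral $\int_0^t f_u$, the extra integration being the analytic counterpart of prepending $x$ (in the iterated-integral model $x\leftrightarrow dt/t$, which is exactly what carries $\varphi(u)$ back into $\FH^0$). The automorphism $\varphi$ is realized here by a rational change of variable, which, being a substitution, preserves products and hence the multiplicativity $f_{w_1\ast w_2}=f_{w_1}f_{w_2}$. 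Granting this, $Z(L_x\varphi(w))$ is the limiting integral of a product $f_{w_1}f_{w_2}$; an integration by parts collapses it to boundary contributions, and the endpoint $t=0$ term vanishes because $f_u(0)=0$ for $u\in\FH y$ (no constant term), leaving only the regularized $t\to 1$ boundary, which cancels. This yields $Z(L_x\varphi(w))=0$.

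The main obstacle is precisely the analytic construction of the previous paragraph. Individual harmonic sums $H_n(u)$ need not converge as $n\to\infty$ (already $H_n(y)$ is the divergent harmonic number), so the functions $f_u$ and the limit $\lim_{t\to1^-}$ are delicate; it is exactly the alternating structure introduced by $\varphi$ that restores convergence of the regularized object and tames the $t\to1$ boundary. Establishing the existence of that limit, the legitimacy of interchanging it with the product factorization furnished by the harmonic sums, and the cancellation of the regularized boundary term is the genuine content of the argument, carried out in \cite[Corollary 4.9]{Kaw}; by contrast the reduction, the domain check, and the linearization of $\ast$ by $H_n$ are routine inductions by weight.
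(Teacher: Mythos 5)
The paper offers no proof of this proposition at all: it is quoted as the linear part of Kawashima's relation with the bare reference \cite[Corollary 4.9]{Kaw}, so it functions as an external input to Section 4 rather than something proved internally. In that sense your proposal and the paper end up in the same place, since you too defer ``the genuine content'' to that corollary. Your preliminary reductions are all correct: bilinearity reduces to $w=w_1\ast w_2$ with $w_1,w_2\in\FH y$; $\varphi(\FH y)\subset\FH y$ and $\FH y\ast\FH y\subset\FH y$ give $L_x\varphi(\FH y\ast\FH y)\subset x\FH y\subset\FH^0$; and the truncated sums $H_n$ do convert $\ast$ into pointwise multiplication.

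However, read as a proof rather than as an annotated citation, the proposal has a gap exactly where the content lies, and the mechanism you sketch is not the one that actually works. Kawashima does not integrate a product of one-variable functions and cancel boundary terms; he forms a Newton series $F_u(z)=\sum_{n\ge1}(-1)^{n+1}\binom{z}{n}a_n(u)$ from a binomial transform $a_n(u)$ of the harmonic sums, shows that $F_{w_1\ast w_2}=F_{w_1}F_{w_2}$ and $F_u(0)=0$ for $u\in\FH y$, and identifies the coefficient of $z$ in the Taylor expansion of $F_u$ at $z=0$ with $Z(L_x\varphi(u))$ up to normalization. The proposition is then immediate, because the linear Taylor coefficient at $0$ of a product of two functions vanishing at $0$ is zero. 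Your version of this step --- an unconstructed function $f_u(t)$, an integral $\int_0^t f_u$, an integration by parts whose ``regularized $t\to1$ boundary cancels'' --- is asserted rather than derived: the multiplicativity of $f$ is not shown to survive the passage from the $H_n$ to $f_u$, the interchange of limits is not justified, and the claimed boundary cancellation is precisely the statement to be proved, not a consequence of $f_u(0)=0$. So either cite \cite[Corollary 4.9]{Kaw} outright, as the paper does, or replace the last two paragraphs with the Newton-series argument.
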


Let $\tau$ be an anti-automorphism of $\FH$ defined by $\tau(x)=y$ and $\tau(y)=x$, which is known to induce the duality for MZV's: $(1-\tau)(\FH^0)\subset \mathrm{ker} Z$. In \cite{Kaw}, Kawashima proved that Kawashima's relation contains the duality formula:
\begin{lem}\label{lem4.1}
$(1-\tau)(\FH^0)\subset L_x\varphi(\FH y\ast\FH y). $
\end{lem}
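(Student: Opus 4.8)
The plan is to reduce the assertion to an explicit membership statement about the harmonic product and then prove that statement by a recursion. First I would use $\Q$-linearity to reduce to monomials: since $\FH^0=\Q+x\FH y$ and $(1-\tau)(1)=0$, it suffices to treat admissible words $w=z_{k_1}\cdots z_{k_r}$ with $k_1\ge 2$. Next I would record two elementary facts about $\varphi$: it is an involution, $\varphi^2=\mathrm{id}$ (check $\varphi^2(x)=\varphi(x+y)=x$ and $\varphi^2(y)=\varphi(-y)=y$), and it satisfies $\varphi L_x^{-1}=L_z^{-1}\varphi$ on $x\FH$. Because $w$ ends in $y$, both $w$ and $\tau(w)$ begin with $x$, so $(1-\tau)(w)\in x\FH$ and the common leading $x$ may be stripped. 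Using injectivity of $L_x$ and involutivity of $\varphi$, the containment $(1-\tau)(w)\in L_x\varphi(\FH y\ast\FH y)$ becomes equivalent to
\[
\beta(w):=\varphi\bigl(L_x^{-1}(1-\tau)(w)\bigr)\in\FH y\ast\FH y .
\]
This is the membership statement I would aim to establish.

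Second, I would pin down the target space. By the defining recursion ii) of the harmonic product, $\FH y$ is a $\ast$-subalgebra (the augmentation ideal), so $\FH y\ast\FH y$ is exactly the $\ast$-decomposable part, i.e.\ all $\ast$-products of two or more elements of $\FH y$. Thus the task is: for each admissible $w$, exhibit $\beta(w)$ as an explicit $\Q$-combination of products $z_{\mathbf a}\ast z_{\mathbf b}$, $z_{\mathbf a}\ast z_{\mathbf b}\ast z_{\mathbf c}$, and so on. Computation in low weight is encouraging and guides the guess: one finds $\beta(z_3)=-(z_1\ast z_1)$ and $\beta(z_4)=-(z_1\ast z_2)$. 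Already at $w=z_5$ no single product works — there $\beta(z_5)=-z^3y-y^4$ is a genuine combination of several decomposables — which signals that no closed one-term formula is available and that a recursive construction is needed.

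Third, I would set up an induction on the weight of $w$, and within a fixed weight on the depth $r$, building $\beta(w)$ from the harmonic recursion. The natural tool is rule ii), which peels the first letter $z_{k_1}$ off a $\ast$-product, together with the operator identities $\tau L_x=R_y\tau$, $\varphi L_x=L_z\varphi$ and $\varphi R_y=-R_y\varphi$, which let me transport the recursion through $\varphi$ and $\tau$. The main obstacle, which I expect to absorb most of the work, is precisely the mismatch between the two recursions in play: the duality $\tau$ acts by \emph{reversing} a word, hence interacts with its \emph{last} letter, whereas rule ii) naturally strips the \emph{first} letter. Reconciling these — showing that the alternating difference $(1-\tau)(w)$, after transport by $\varphi$, reorganizes exactly into a decomposable element — is the combinatorial heart of the lemma. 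I would expect to close it either by introducing a generating series that converts the reversal into a first-letter recursion, or by following Kawashima's explicit expansion in \cite{Kaw}, to which this statement is ultimately due.
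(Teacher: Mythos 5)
First, a point of comparison: the paper does not prove this lemma at all --- it is quoted from Kawashima \cite{Kaw} as a known result, so there is no internal argument to measure your proposal against. Your reduction is correct as far as it goes: $\varphi$ is indeed an involution, for admissible $w$ both $w$ and $\tau(w)$ begin with $x$, and since $L_x$ is injective on $\FH$ the containment $(1-\tau)(w)\in L_x\varphi(\FH y\ast\FH y)$ is equivalent to $\beta(w):=\varphi\bigl(L_x^{-1}(1-\tau)(w)\bigr)\in\FH y\ast\FH y$. Your low-weight checks are also right: $\beta(z_3)=-xy-2y^2=-(z_1\ast z_1)$ and $\beta(z_4)=-x^2y-xy^2-yxy=-(z_1\ast z_2)$, and the observation that $\FH y$ is closed under $\ast$ (so that longer $\ast$-products still lie in $\FH y\ast\FH y$) is sound.

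However, the proposal stops exactly where the lemma begins. The entire content of the statement is the claim that $\beta(w)$ is $\ast$-decomposable for \emph{every} admissible $w$, and your third paragraph only names the obstruction --- the mismatch between the last-letter action of the anti-automorphism $\tau$ and the first-letter recursion of the harmonic product --- without overcoming it. No induction hypothesis is formulated, no recursion expressing $\beta(z_{k_1}w')$ in terms of $\beta$ of shorter or lower-depth words is derived, and the two proposed ways out (``a generating series'' or ``following Kawashima's explicit expansion'') are placeholders rather than arguments. This is not a routine verification that can be waved through: the statement is a genuine theorem of \cite{Kaw}, established there by an explicit analysis of the harmonic product rather than by an evident word-by-word induction, and your own example $\beta(z_5)$ already shows that no uniform one-term formula exists. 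As written, the proposal is a correct reformulation plus a verified base case with the inductive step missing; if the intended completion is ``cite Kawashima,'' that is precisely what the paper does, and the preceding setup, while correct, does not add a proof.
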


\subsection{Main result 2}\label{subsec4.2}
For $w\in\FH^1$, let $\mathcal{H}_w(v):=w\ast v\ (v\in\FH^1)$. Denote by $\FH_n^1$ the degree $n$ homogenous part of $\FH^1$. 
Let $\mathfrak{W}$ be the $\Q$-vector space generated by $\{\mathcal{H}_w|w\in\FH^1\}$, and $\mathfrak{W}_n$ the vector subspace of $\mathfrak{W}$ generated by $\{\mathcal{H}_w|w\in\FH^1_n\}$. Let $\mathfrak{W}^{\prime}$ be the $\Q$-vector space generated by $\{L_{z_k}\mathcal{H}_w|k\ge 1,\ w\in\FH^1\}$, and $\mathfrak{W}^{\prime}_n$ the vector subspace of $\mathfrak{W}^{\prime}$ generated by $\{L_{z_k}\mathcal{H}_w|1\le k\le n,\,w\in\FH^1_{n-k}\}$. The $\Q$-linear map $ \lambda : \mathfrak{W}^{\prime}\to\mathfrak{W}$ is defined by 
\[
\lambda(L_{z_k}\mathcal{H}_w)=\mathcal{H}_{z_kw}. 
\]
Here, we show the well-definedness of the map $\lambda$. Assume that
\begin{equation}
\displaystyle \sum_{(z_k,w)}C_{(z_k,w)}L_{z_k}\mathcal{H}_w=0\ (\in\mathfrak{W}), \label{20}
\end{equation}
where the sum is over a finite number of pairs of words $(z_k,w)$. Applying $(\ref{20})$ to $1\in\FH$, we have
$$ \displaystyle \sum_{(z_k,w)}C_{(z_k,w)}z_kw=0. $$
Then, for each $z_k$, we have
$$ \displaystyle \sum_wC_{(z_k,w)}w=0 $$
where the sum is over different words $w$. Therefore, each coefficient $C_{(z_k,w)}$ becomes zero, and hence, $L_{z_k}\mathcal{H}_w$'s are linearly independent. 
We also set $\chi_x:=\tau L_x\varphi$. Then we have the following. 
\begin{thm}\label{thm4.4}
Let $n$ be a positive integer. For any rooted forest map $f$ with $\deg f=n$, we have
\begin{itemize}
\item[(A)] $\varphi\tau\phi_f R_x\tau\varphi \in\mathfrak{W}_n^{\prime}$. 
\item[(B)] $\chi_x^{-1}f\chi_x=-\lambda (\varphi\tau\phi_f R_x\tau\varphi)\in\mathfrak{W}_n$. 
\end{itemize}
\end{thm}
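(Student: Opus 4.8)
The plan is to push everything through conjugation by $\chi_x=R_y\tau\varphi$ and reduce both (A) and (B) to a single operator identity encoding the stuffle product. First I would record $\chi_x=R_y\tau\varphi$, $\chi_x^{-1}=\varphi\tau R_y^{-1}$, and observe that $\Theta(T):=\varphi\tau T\tau\varphi$ is conjugation by the invertible operator $\varphi\tau$, hence an algebra automorphism of $\mathrm{End}(\FH)$. Using $\tau R_x\tau=L_y$, $\varphi L_y\varphi=-L_y$, $\tau R_y\tau=L_x$, $\varphi L_x\varphi=L_z$ one gets $\Theta(R_x)=-L_y$ and $\Theta(R_z)=L_x$. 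Next, from $\psi_f=[f,R_x]=R_y\phi_f R_x$ together with $[f,R_x]=-[f,R_y]$ (property (a) and $f(z)=0$) I would extract the operator identity $R_y^{-1}fR_y=f-\phi_f R_x$, valid on all of $\FH$ because $f(\FH^1)\subset\FH y$ (property (b)). Writing $A_f:=\varphi\tau\phi_f R_x\tau\varphi=\Theta(\phi_f)\Theta(R_x)=-\Theta(\phi_f)L_y$, this gives
\[
\chi_x^{-1}f\chi_x=\Theta(f-\phi_f R_x)=\Theta(f)-A_f .
\]
Thus (A) is the statement $A_f\in\mathfrak{W}_n'$ and (B) is equivalent to $\Theta(f)=A_f-\lambda(A_f)$, i.e. $\chi_x^{-1}f\chi_x=-\lambda(A_f)$.

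The engine of the argument is the operator form of the stuffle recursion: for $w\in\FH^1$ and $k,l\ge 1$,
\[
\bigl(\mathcal{H}_{z_kw}-L_{z_k}\mathcal{H}_w\bigr)L_{z_l}=L_{z_l}\mathcal{H}_{z_kw}+L_{z_{k+l}}\mathcal{H}_w ,
\]
which I would obtain by applying rule ii) of $\ast$ to $z_kw\ast z_lv$. Combined with $L_xL_{z_k}=L_{z_{k+1}}$ and $\mathcal{H}_w\mathcal{H}_{w'}=\mathcal{H}_{w\ast w'}$ (commutativity and associativity of $\ast$), this is exactly what keeps the relevant expressions inside the graded piece $\mathfrak{W}_n'$ and tracks the action of $\lambda$.

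I would then prove (A) and (B) simultaneously by induction on $n=\deg f$. The base case $n=1$ is the single node, where $\phi_f=\mathrm{id}$, so $A_f=-L_y=-L_{z_1}\mathcal{H}_1\in\mathfrak{W}_1'$ and a direct computation gives $\Theta(f)=\mathcal{H}_y-L_y=A_f-\lambda(A_f)$. For the tree case $f=B_+(g)$ I would use $\phi_{B_+(g)}=g+R_z\phi_g$ (property (c)) with $\Theta(R_z)=L_x$ and the inductive $\Theta(g)=A_g-\lambda(A_g)$ to get
\[
A_f=L_xA_g-A_gL_y+\lambda(A_g)L_y ;
\]
the last two terms recombine, via the displayed identity with $l=1$, into a member of $\mathfrak{W}_n'$, which yields (A), and comparing $\lambda(A_f)$ against $\Theta(B_+(g))$ yields (B). For the product case $f=gh$ I would use that forest multiplication acts as composition of maps (property (i''), together with (e)), so conjugation is multiplicative: $\chi_x^{-1}(gh)\chi_x=(\chi_x^{-1}g\chi_x)(\chi_x^{-1}h\chi_x)=\lambda(A_g)\lambda(A_h)=\mathcal{H}_{p\ast q}\in\mathfrak{W}_n$, where $\lambda(A_g)=\mathcal{H}_p$, $\lambda(A_h)=\mathcal{H}_q$. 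Since $\Theta$ is multiplicative, this forces $A_{gh}=A_gA_h-A_g\lambda(A_h)-\lambda(A_g)A_h$, which I would show lies in $\mathfrak{W}_n'$ and satisfies $\lambda(A_{gh})=-\lambda(A_g)\lambda(A_h)$ by repeated use of the stuffle identity, establishing both (A) and (B).

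The hard part will be the bookkeeping in the two inductive steps: verifying that the a priori ``off-$\mathfrak{W}'$'' combinations (the terms $A_gL_y$ and $\lambda(A_g)L_y$ in the tree case, and the three operator products in the forest case) genuinely reassemble into elements of the graded piece $\mathfrak{W}_n'$, and that $\lambda$ intertwines them exactly as claimed. All of this is controlled by the single stuffle operator identity above, so the real effort is organizing its repeated application and the grading bookkeeping rather than any conceptually new ingredient.
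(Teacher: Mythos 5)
Your architecture is essentially the paper's --- induction on $\deg f$, the conjugation $\Theta(T):=\varphi\tau T\tau\varphi$ with $\Theta(R_x)=-L_y$, $\Theta(R_z)=L_x$ (Lemma \ref{lem4.3.0}), the identity $\chi_x^{-1}f\chi_x=\Theta(f)-A_f$ with $A_f:=\varphi\tau\phi_fR_x\tau\varphi$ coming from $R_y^{-1}fR_y=f-\phi_fR_x$, and the stuffle operator identity, which is exactly Lemmas \ref{lem4.3.1}--\ref{lem4.3.3}. Where you genuinely diverge is in the proof of (A): you split into the tree case, fed by property (c) ($\phi_{B_+(g)}=g+R_z\phi_g$, giving $A_{B_+(g)}=L_xA_g+(\lambda-1)(A_g)L_{z_1}\in\mathfrak{W}_n'$), and the product case, fed by multiplicativity of conjugation; the paper instead expands $\phi_f=\sum_jd_jR_{z^{n-j}}$ uniformly via properties (d) and (e) and closes with Lemma \ref{lem4.3.3}. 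Your route is arguably tidier, and your product case for (B), where $\chi_x^{-1}(gh)\chi_x=(\chi_x^{-1}g\chi_x)(\chi_x^{-1}h\chi_x)$ forces $A_{gh}=(\lambda-1)(A_g)(\lambda-1)(A_h)-\lambda(A_g)\lambda(A_h)=-Y$ with $(\lambda-1)(Y)=(\lambda-1)(A_g)(\lambda-1)(A_h)$ and $\lambda(Y)=\lambda(A_g)\lambda(A_h)$, does work --- though note that multiplicativity of conjugation needs property (f) (that $gh$ acts as the composition $g\circ h$ on all of $\FH$), not just (i'').

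The gap is in (B) for the tree case. You say that ``comparing $\lambda(A_f)$ against $\Theta(B_+(g))$ yields (B)'', but there is nothing explicit to compare against: $B_+(g)$ has no closed operator formula in terms of $g$ --- it is defined only by its values on generators together with the coproduct recursion --- so $\Theta(B_+(g))$ cannot be computed directly. The missing idea is a uniqueness principle: a linear operator on $\FH^1$ is determined by its value at $1$ and its commutators with all $L_{z_k}$ (Lemma \ref{lem4.3.5}). Concretely, applying $\Theta$ to $\psi_f=R_y\phi_fR_x$ and to $[f,R_z]=0$ gives $[\Theta(f),L_y]=-L_zA_f$ and $[\Theta(f),L_x]=0$, hence
\[
[\Theta(f),L_{z_k}]=-L_{x^{k-1}}L_zA_f=-(L_{x^k}+L_{z_k})A_f,
\]
while Lemma \ref{lem4.3.1} gives $[-(\lambda-1)(A_f),L_{z_k}]=-(L_{x^k}+L_{z_k})A_f$ as well; since both operators kill $1$ (Lemma \ref{lem4.3.4}), they coincide, which is what actually forces $\Theta(f)=A_f-\lambda(A_f)$. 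This commutator-plus-value-at-$1$ argument is the engine of the paper's proof of (B) (and is also what your ``direct computation'' in the base case implicitly relies on); without it, the tree case --- and hence the induction --- does not close. Note also that this step requires (A) for $f$ to be established first, so that $\lambda(A_f)$ is defined and Lemma \ref{lem4.3.1} applies.
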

\begin{rem}
In (B), the expression $\chi_x^{-1}=\varphi\tau R_y^{-1}$ makes sense because (b) in the Introduction has been shown in the previous section. 
\end{rem}
\begin{proof}[Proof of Theorem \ref{thm4.4}]
We begin with the case of $n=1$. We have
\begin{equation}\label{eq26}
\varphi\tau\phi_{\includegraphics[width=2mm,height=1.8mm]{deg1.png}}R_x\tau\varphi =-L_y\in\mathfrak{W}_1^{\prime}, 
\end{equation}
and hence (A) holds. Because of (a) and (b) in the Introduction, we find
\begin{equation}\label{eq27}
R_y^{-1}\includegraphics[width=2.8mm,height=2.5mm]{deg1.png}R_y=R_y^{-1}(R_y\includegraphics[width=2.8mm,height=2.5mm]{deg1.png}-\psi_{\includegraphics[width=2mm,height=1.8mm]{deg1.png}})=\includegraphics[width=2.8mm,height=2.5mm]{deg1.png}-\phi_{\includegraphics[width=2mm,height=1.8mm]{deg1.png}}R_x. 
\end{equation}
We also calculate
\begin{align*}
\includegraphics[width=2.8mm,height=2.5mm]{deg1.png}\tau\varphi L_{z_k}&=-\includegraphics[width=2.8mm,height=2.5mm]{deg1.png}R_{z^{k-1}}R_x\tau\varphi \\
&=-R_{z^{k-1}}(\psi_{\includegraphics[width=2mm,height=1.8mm]{deg1.png}}+R_x\includegraphics[width=2.8mm,height=2.5mm]{deg1.png})\tau\varphi \\
&=R_{z^{k-1}}(R_y\phi_{\includegraphics[width=2mm,height=1.8mm]{deg1.png}}R_x+R_x\includegraphics[width=2.8mm,height=2.5mm]{deg1.png})\tau\varphi
\end{align*}
by using (a), (d) in the Introduction and Lemma \ref{lem4.3.0}. Hence we have
\begin{align*}
[\chi_x^{-1}\includegraphics[width=2.8mm,height=2.5mm]{deg1.png}\chi_x,L_{z_k}]&=\chi_x^{-1}\includegraphics[width=2.8mm,height=2.5mm]{deg1.png}\chi_x L_{z_k}-L_{z_k}\chi_x^{-1}\includegraphics[width=2.8mm,height=2.5mm]{deg1.png}\chi_x \\
&=\varphi\tau (\includegraphics[width=2.8mm,height=2.5mm]{deg1.png}-\phi_{\includegraphics[width=2mm,height=1.8mm]{deg1.png}}R_x)\tau\varphi L_{z_k}-L_{z_k}\varphi\tau (\includegraphics[width=2.8mm,height=2.5mm]{deg1.png}-\phi_{\includegraphics[width=2mm,height=1.8mm]{deg1.png}}R_x)\tau\varphi \\
&=-\varphi\tau R_{z^{k-1}}(R_y\phi_{\includegraphics[width=2mm,height=1.8mm]{deg1.png}}R_x+R_x \includegraphics[width=2.8mm,height=2.5mm]{deg1.png})\tau\varphi - \varphi\tau \phi_{\includegraphics[width=2mm,height=1.8mm]{deg1.png}}R_x\tau\varphi L_{z_k}-L_{z_k}\varphi\tau (\includegraphics[width=2.8mm,height=2.5mm]{deg1.png}-\phi_{\includegraphics[width=2mm,height=1.8mm]{deg1.png}}R_x)\tau\varphi \\
&=-L_{x^k}\varphi\tau\phi_{\includegraphics[width=2mm,height=1.8mm]{deg1.png}}R_x\tau\varphi -\varphi\tau\phi_{\includegraphics[width=2mm,height=1.8mm]{deg1.png}}R_x\tau\varphi L_{z_k} \\
&=[\lambda (-\varphi\tau\phi_{\includegraphics[width=2mm,height=1.8mm]{deg1.png}}R_x\tau\varphi ),L_{z_k}].
\end{align*}
Here we use Lemma \ref{lem4.3.1} and \eqref{eq26} for the last equality. By \eqref{eq27} and $\includegraphics[width=2.8mm,height=2.5mm]{deg1.png}(1)=0$, 
\[
\chi_x^{-1}\includegraphics[width=2.8mm,height=2.5mm]{deg1.png}\chi_x (1)=\varphi\tau (\includegraphics[width=2.8mm,height=2.5mm]{deg1.png}-\phi_{\includegraphics[width=2mm,height=1.8mm]{deg1.png}}R_x)\tau\varphi (1)=-\varphi\tau\phi_{\includegraphics[width=2mm,height=1.8mm]{deg1.png}}R_x\tau\varphi (1), 
\]
and by Lemma \ref{lem4.3.4} this is equal to $\lambda (-\varphi\tau\phi_{\includegraphics[width=2mm,height=1.8mm]{deg1.png}}R_x\tau\varphi )(1)$. Therefore we conclude (B) for $f=\includegraphics[width=2.8mm,height=2.5mm]{deg1.png}$ by using Lemma \ref{lem4.3.5}. 

Now suppose that (A) and (B) hold for any rooted forest map of degree $<n$ and let $f$ be any rooted forest map of degree $n$. We remark that
\begin{equation}\label{eq29}
R_y^{-1}f R_y=R_y^{-1}(R_y f-\psi_f)=f-\phi_f R_x,
\end{equation}
which is because of (a) and (b) in the Introduction. We obtain
\begin{equation}\label{eq28}
\varphi\tau f\tau\varphi =\chi_x^{-1}f\chi_x +\varphi\tau\phi_f R_x\tau\varphi =(\lambda -1)(-\varphi\tau\phi_fR_x\tau\varphi )\in (\lambda -1)(\mathfrak{W}_{n-1}^{\prime})
\end{equation}
because of \eqref{eq29} and (B). According to (e) in the Introduction, we have the expression
\[
\phi_f =\sum_{j=0}^n d_j R_{z^{n-j}}\quad (d_j\in\Q[\text{rooted tree maps}]_{(j)}), 
\]
and hence 
\[
\phi_f R_x\tau\varphi =\sum_{j=0}^n d_j R_{z^{n-j}}R_x\tau\varphi =\sum_{j=0}^n d_j\tau\varphi L_{z_{n+1-j}}. 
\]
We find 
\[
\varphi\tau d_j\tau\varphi\in (\lambda -1)(\mathfrak{W}_j^{\prime})\quad (1\leq j\leq n)
\]
because of \eqref{eq28} and Lemma \ref{lem4.3.3}. Therefore we obtain
\[
\varphi\tau\phi_f R_x\tau\varphi\in\Q\cdot L_{z_{n+1}}+\sum_{j=1}^n (\lambda -1)(\mathfrak{W}_j^{\prime})L_{z_{n+1-j}}\subset\mathfrak{W}_{n+1}^{\prime}, 
\]
which is expected as (A) for $f$. 

We calculate
\[
f\tau\varphi L_{z_{k}}=-R_{z^{k-1}}f R_x\tau\varphi =-R_{z^{k-1}}(R_y\phi_f R_x+R_x f) \tau\varphi
\]
by using (a), (d) in the Introduction and Lemma \ref{lem4.3.0}. Hence ,by using \eqref{eq29} and similar calculation above, we have
\[
[\chi_x^{-1}f\chi_x, L_{z_k}]=[\lambda (-\varphi\tau\phi_f R_x\tau\varphi), L_{z_k}]. 
\]
For this equality, we use Lemma \ref{lem4.3.1} and (A) for $f$ which has already been obtained. By \eqref{eq29} and $f(1)=0$, 
\[
\chi_x^{-1} f\chi_x (1)=\varphi\tau (f-\phi_f R_x)\tau\varphi (1)=-\varphi\tau\phi_f R_x\tau\varphi (1),
\]
which is found to be equal to $\lambda (-\varphi\tau\phi_f R_x\tau\varphi)(1)$ by using Lemma \ref{lem4.3.4}. Therefore we conclude (B) for $f$ by using Lemma \ref{lem4.3.5}. This completes the proof. 
\end{proof}
\begin{cor}
For any rooted forest map $f\neq\I$, there is an element $w\in\FH y$ such that
\[
f\chi_x =\chi_x\mathcal{H}_w. 
\]
\end{cor}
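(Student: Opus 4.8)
The plan is to deduce the corollary almost immediately from part (B) of Theorem \ref{thm4.4}, which already presents the conjugated operator $\chi_x^{-1}f\chi_x$ as an element of $\mathfrak{W}_n$. First I would set $n:=\deg f$ and note $n\geq 1$ since $f\neq\I$. By (B),
\[
\chi_x^{-1}f\chi_x=-\lambda(\varphi\tau\phi_f R_x\tau\varphi)\in\mathfrak{W}_n,
\]
so $\chi_x^{-1}f\chi_x$ is a finite $\Q$-linear combination $\sum_i c_i\mathcal{H}_{w_i}$ with each $w_i\in\FH^1_n$.

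The key point I would then invoke is that $w\mapsto\mathcal{H}_w$ is itself $\Q$-linear, a consequence of the $\Q$-bilinearity of the harmonic product $\ast$: one has $\mathcal{H}_w+\mathcal{H}_{w'}=\mathcal{H}_{w+w'}$ and $c\,\mathcal{H}_w=\mathcal{H}_{cw}$. Hence the above combination collapses to a single operator $\mathcal{H}_w$ with $w:=\sum_i c_i w_i\in\FH^1_n$, giving $\chi_x^{-1}f\chi_x=\mathcal{H}_w$. Left-multiplying by $\chi_x$ (which is invertible, as recorded in the remark preceding this corollary) yields $f\chi_x=\chi_x\mathcal{H}_w$, exactly the asserted identity.

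Finally I would check that $w$ lies in $\FH y$. Since $\FH^1=\Q+\FH y$, its degree $n$ homogeneous part for $n\geq 1$ consists only of words ending in $y$, so $\FH^1_n\subset\FH y$ and therefore $w\in\FH y$. I do not expect a genuine obstacle: the substantive work is already carried out in Theorem \ref{thm4.4}(B), and what remains is merely the observation that bilinearity of $\ast$ lets a linear combination of the $\mathcal{H}_{w_i}$ be rewritten as a single $\mathcal{H}_w$, together with the elementary degree bookkeeping that forces $w\in\FH y$.
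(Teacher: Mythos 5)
Your proposal is correct and follows the same route the paper intends: the corollary is an immediate consequence of Theorem \ref{thm4.4}(B), since the $\Q$-bilinearity of $\ast$ makes $w\mapsto\mathcal{H}_w$ linear, so the element of $\mathfrak{W}_n$ produced by (B) is a single $\mathcal{H}_w$ with $w\in\FH^1_n\subset\FH y$ for $n\geq 1$ (the paper leaves exactly this bookkeeping implicit, recording only $w=\chi_x^{-1}f(y)$ in the following remark).
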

\begin{rem}
Such $w$ in the corollary is determined by
\[
w=\mathcal{H}_w(1)=\chi_x^{-1}f\chi_x(1)=\chi_x^{-1}f(y). 
\]
\end{rem}
\begin{cor}\label{cor4.8}
For any rooted forest map $f\neq\I$, we have
\[
f(\FH^0)\subset\ker Z.
\]
\end{cor}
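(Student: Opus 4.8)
The plan is to deduce the statement from the corollary just established, namely $f\chi_x=\chi_x\mathcal{H}_w$ for some $w\in\FH y$, combined with the linear part of Kawashima's relation (Proposition~\ref{linK}) and the duality relation $(1-\tau)(\FH^0)\subset\ker Z$. First I would record two elementary facts about $\chi_x=\tau L_x\varphi$. Since $\tau$ and $\varphi$ are involutions of $\FH$, the inverse is $\chi_x^{-1}=\varphi\tau R_y^{-1}$ as noted after Theorem~\ref{thm4.4}. Moreover, checking on generators, $\varphi(\FH y)=\FH y$ (because $\varphi$ is an automorphism with $\varphi(y)=-y$) and $\tau(x\FH y)=x\FH y$ (because $\tau$ is an anti-automorphism with $\tau(x)=y$), so that $\chi_x=\tau L_x\varphi$ restricts to a bijection from $\FH y$ onto $x\FH y$. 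In particular every $\eta\in x\FH y$ can be written as $\eta=\chi_x(v)$ with $v=\chi_x^{-1}(\eta)\in\FH y$.

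Next I would upgrade Proposition~\ref{linK} from a statement about $L_x\varphi$ to one about $\chi_x$. Using the stuffle rules one checks $\FH y\ast\FH y\subset\FH y$, and then $L_x\varphi(\FH y\ast\FH y)\subset x\FH y\subset\FH^0$; thus Proposition~\ref{linK} places this space inside $\ker Z\cap\FH^0$. Now $\chi_x=\tau\,(L_x\varphi)$, and the duality relation gives $Z\circ\tau=Z$ on $\FH^0$, so $\tau$ preserves $\ker Z\cap\FH^0$. Applying $\tau$ therefore yields the key inclusion $\chi_x(\FH y\ast\FH y)\subset\ker Z$, where note that $\chi_x(\FH y\ast\FH y)\subset x\FH y\subset\FH^0$ so that $Z$ is indeed defined on it.

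With these in hand the conclusion follows quickly. Decompose $\FH^0=\Q\cdot 1+x\FH y$. On the constant term $f(1)=0$ since $\deg f\geq 1$, so $f(\Q\cdot 1)=0\subset\ker Z$. For $\eta\in x\FH y$ put $v=\chi_x^{-1}(\eta)\in\FH y$; then the corollary gives $f(\eta)=f\chi_x(v)=\chi_x\mathcal{H}_w(v)=\chi_x(w\ast v)$, and since $w,v\in\FH y$ we have $w\ast v\in\FH y\ast\FH y$, whence $f(\eta)\in\chi_x(\FH y\ast\FH y)\subset\ker Z$. Combining the two cases gives $f(\FH^0)\subset\ker Z$.

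The routine parts are the generator-level identities for $\varphi$, $\tau$, $L_x$ and the verification $\FH y\ast\FH y\subset\FH y$. The one genuinely structural point, which I would treat most carefully, is the passage from Proposition~\ref{linK} to $\chi_x(\FH y\ast\FH y)\subset\ker Z$: conjugating the Kawashima space by the factor $\tau$ hidden in $\chi_x$ must stay inside $\ker Z$, and this is exactly where the duality relation is indispensable.
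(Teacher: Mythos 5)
Your proof is correct, and its skeleton is the same as the paper's: decompose $\FH^0=\Q+x\FH y$, use $f(1)=0$, write $x\FH y=\chi_x(\FH y)$, and invoke the preceding corollary to get $f(x\FH y)=\chi_x\mathcal{H}_w(\FH y)\subset\chi_x(\FH y\ast\FH y)$. The one step where you diverge is the justification that $\chi_x(\FH y\ast\FH y)\subset\ker Z$. You first apply Proposition~\ref{linK} to get $L_x\varphi(\FH y\ast\FH y)\subset\ker Z$ and then push the extra $\tau$ in $\chi_x=\tau L_x\varphi$ through $\ker Z$ using the duality relation $(1-\tau)(\FH^0)\subset\ker Z$ as an independently known fact. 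The paper instead writes $\tau=1-(1-\tau)$ and uses Lemma~\ref{lem4.1}, Kawashima's result that $(1-\tau)(\FH^0)\subset L_x\varphi(\FH y\ast\FH y)$, to conclude the stronger inclusion $\chi_x(\FH y\ast\FH y)\subset L_x\varphi(\FH y\ast\FH y)$, applying $Z$ only at the very last moment. Both arguments are valid (the paper itself records the duality relation as known), but the paper's version buys slightly more: it shows that $f(\FH^0)$ lies inside the linear part of Kawashima's relation itself, not merely inside $\ker Z$, and it keeps the whole proof dependent on a single source of relations rather than on Kawashima's relation plus duality as separate inputs. Your supporting verifications ($\FH y\ast\FH y\subset\FH y$, $\varphi(\FH y)\subset\FH y$, $\tau(x\FH y)=x\FH y$, hence $\chi_x(\FH y)=x\FH y$ and $L_x\varphi(\FH y\ast\FH y)\subset\FH^0$ so that $Z$ is defined there) are all correct and are exactly the routine facts the paper leaves implicit.
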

\begin{proof}
It is enough to show, for any rooted forest map $f$, 
\[
f(x\FH y)\subset\ker Z
\]
because of $\FH^0=\Q+x\FH y$ and $f(\Q)=\{0\}$. 

By definition of $\varphi$ and $\tau$, we find
\[
\chi_x (\FH y)=x\FH y.
\]
By the previous corollary, there exists $w\in\FH y$ such that
\[
f\chi_x =\chi_x \mathcal{H}_w.
\]
We also notice that
\[
\chi_x(\FH y\ast\FH y)=(1-(1-\tau))(\FH y\ast\FH y)\subset L_x\varphi(\FH y\ast\FH y)
\]
due to Lemma \ref{lem4.1}. Therefore we have
\[
f(x\FH y)=f\chi_x (\FH y)=\chi_x\mathcal{H}_w (\FH y)\subset L_x\varphi(\FH y\ast\FH y). 
\]
Thanks to Proposition \ref{linK}, we have the conclusion. 
\end{proof}
\noindent As a consequence, we have Theorem \ref{mthm3}. 
\begin{rem}
Actually, rooted tree maps given by the natural growth $N^k(\I)$ of $\I$, is nothing but the quasi-derivation operator $(k-1)!\partial_k^{(1)}$ described in \cite{Kan,T}. 
\end{rem}

\subsection{Lemmata}\label{subsec4.3}
Following lemmata are required in the proof of main result 2 in the previous section.  

\begin{lem}\label{lem4.3.0}
$\varphi L_x=L_z\varphi,\ \varphi L_y=-L_y\varphi,\ \tau L_x=R_y\tau,\ \tau L_y=R_x \tau,\ \tau R_x=L_y\tau,\ \tau R_y=L_x\tau .$
\end{lem}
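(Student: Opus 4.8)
The plan is to verify all six identities as equalities of $\Q$-linear operators on $\FH$; since both sides are $\Q$-linear, it suffices to evaluate them on an arbitrary word $w\in\FH$ and compare. The whole content is that $\varphi$ and $\tau$ interact with one-letter concatenation through their (anti-)multiplicativity, so each identity collapses to a single application of the defining values on the generators $x,y$. There is therefore no induction and no case analysis on the length of $w$ needed: the word $w$ just goes along for the ride.

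For the two identities involving $\varphi$, I would use that $\varphi$ is an algebra automorphism, so $\varphi(uw)=\varphi(u)\varphi(w)$ for $u\in\{x,y\}$. Then $\varphi L_x(w)=\varphi(xw)=\varphi(x)\varphi(w)=z\,\varphi(w)=L_z\varphi(w)$, which gives $\varphi L_x=L_z\varphi$; and $\varphi L_y(w)=\varphi(yw)=\varphi(y)\varphi(w)=-y\,\varphi(w)=-L_y\varphi(w)$, which gives $\varphi L_y=-L_y\varphi$.

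For the four identities involving $\tau$, I would use that $\tau$ is an anti-automorphism, so $\tau(uw)=\tau(w)\tau(u)$ and $\tau(wu)=\tau(u)\tau(w)$ for $u\in\{x,y\}$; this order-reversal is precisely what converts a left-concatenation into a right-concatenation and vice versa. Concretely, $\tau L_x(w)=\tau(xw)=\tau(w)\tau(x)=\tau(w)\,y=R_y\tau(w)$ and likewise $\tau L_y(w)=\tau(w)\,x=R_x\tau(w)$; on the other side, $\tau R_x(w)=\tau(wx)=\tau(x)\tau(w)=y\,\tau(w)=L_y\tau(w)$ and $\tau R_y(w)=x\,\tau(w)=L_x\tau(w)$. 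These four evaluations yield the remaining four identities.

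I do not anticipate a genuine obstacle: the lemma is a bookkeeping statement recording how $\varphi$ and $\tau$ commute past the concatenation operators, and every case is an immediate consequence of the (anti-)homomorphism property together with the values $\varphi(x)=z$, $\varphi(y)=-y$, $\tau(x)=y$, $\tau(y)=x$. The only point demanding care is keeping the order of the factors correct when passing $\tau$ through a product, since it is an anti-automorphism rather than an automorphism; getting this wrong would swap a left-concatenation for a right-concatenation in exactly the opposite way.
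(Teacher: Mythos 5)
Your proof is correct and is exactly the direct verification the paper has in mind (its proof is simply ``Easy''): each identity follows from evaluating on a word and using that $\varphi$ is an automorphism with $\varphi(x)=z$, $\varphi(y)=-y$, while $\tau$ is an anti-automorphism with $\tau(x)=y$, $\tau(y)=x$, the order-reversal converting left- into right-concatenation. No gaps.
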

\begin{proof}
Easy. 
\end{proof}

\begin{lem}\label{lem4.3.1}
{\it For any $X\in\mathfrak{W}^{\prime}$ and any $l\ge 1$, we have $[\lambda(X),L_{z_l}]=XL_{z_l}+L_{x^l}X$. }
\end{lem}
\begin{proof}
It is sufficient to show the case in which $X=L_{z_k}\mathcal{H}_w$, which follows directly from 
\begin{equation}
[\mathcal{H}_{z_kw},L_{z_l}]=L_{z_k}\mathcal{H}_wL_{z_l}+L_{z_{k+l}}\mathcal{H}_w, \label{16}
\end{equation}
the harmonic product rule.
\end{proof}

\begin{lem}\label{lem4.3.2}
{\it For any $k,l\ge 1$, we have $(\lambda -1)(\mathfrak{W}^{\prime}_k)L_{z_l}\subset \mathfrak{W}^{\prime}_{k+l}$. }
\end{lem}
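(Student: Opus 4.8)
The plan is to verify the inclusion on the generating set of $\mathfrak{W}^{\prime}_k$ and extend by $\Q$-linearity, the essential tool being the commutator formula of Lemma \ref{lem4.3.1}. First I would fix a generator $X=L_{z_m}\mathcal{H}_w$ of $\mathfrak{W}^{\prime}_k$, so that $1\le m\le k$ and $w\in\FH^1_{k-m}$. Since $\lambda$ is $\Q$-linear, right-composition $T\mapsto TL_{z_l}$ is linear, and $\mathfrak{W}^{\prime}_{k+l}$ is a subspace, it suffices to show $(\lambda-1)(X)L_{z_l}\in\mathfrak{W}^{\prime}_{k+l}$ for such $X$.

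The heart of the argument is to rewrite $(\lambda-1)(X)L_{z_l}$ as a sum of two generators. Applying Lemma \ref{lem4.3.1} to this $X$ gives $[\lambda(X),L_{z_l}]=XL_{z_l}+L_{x^l}X$, which rearranges into
\[
(\lambda-1)(X)L_{z_l}=\lambda(X)L_{z_l}-XL_{z_l}=L_{x^l}X+L_{z_l}\lambda(X).
\]
Thus the single difference splits cleanly into $L_{x^l}X$ and $L_{z_l}\lambda(X)$, and it remains only to place each summand in $\mathfrak{W}^{\prime}_{k+l}$.

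For the first summand I would compute the concatenation directly: $L_{x^l}X=L_{x^l}L_{z_m}\mathcal{H}_w=L_{x^l z_m}\mathcal{H}_w$, and since $x^l z_m=x^l x^{m-1}y=x^{l+m-1}y=z_{l+m}$, this equals $L_{z_{l+m}}\mathcal{H}_w$. Here $1\le l+m\le k+l$ (using $m\ge 1$ and $m\le k$) and $w\in\FH^1_{k-m}=\FH^1_{(k+l)-(l+m)}$, so $L_{z_{l+m}}\mathcal{H}_w$ is a generator of $\mathfrak{W}^{\prime}_{k+l}$ by definition. For the second summand, $L_{z_l}\lambda(X)=L_{z_l}\mathcal{H}_{z_m w}$, where $z_m w\in\FH^1_k=\FH^1_{(k+l)-l}$ and $1\le l\le k+l$, so it too is a generator of $\mathfrak{W}^{\prime}_{k+l}$. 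Summing the two memberships yields $(\lambda-1)(X)L_{z_l}\in\mathfrak{W}^{\prime}_{k+l}$, which completes the argument after passing to linear combinations.

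I expect no serious obstacle: once Lemma \ref{lem4.3.1} is invoked the proof reduces to bookkeeping. The only points requiring care are the degree accounting, namely checking that both the left-concatenation index and the residual word fall in the graded ranges prescribed by the definition of $\mathfrak{W}^{\prime}_{k+l}$, and the tacit verification that $z_m w$ genuinely lies in $\FH^1$ so that $\mathcal{H}_{z_m w}$ is defined; the latter holds because $z_m\in\FH y$ forces $z_m w$ to terminate in $y$.
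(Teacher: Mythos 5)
Your proof is correct and follows essentially the same route as the paper: the paper's one-line proof invokes the harmonic product identity $[\mathcal{H}_{z_kw},L_{z_l}]=L_{z_k}\mathcal{H}_wL_{z_l}+L_{z_{k+l}}\mathcal{H}_w$, which is exactly the content of Lemma \ref{lem4.3.1} that you use, and your rearrangement $(\lambda-1)(X)L_{z_l}=L_{x^l}X+L_{z_l}\lambda(X)$ together with the degree bookkeeping is precisely the ``directly follows'' step made explicit. (Only a trivial quibble: $z_mw\in\FH^1$ because $w\in\FH^1=\Q+\FH y$ is either a scalar or ends in $y$, not because $z_m$ ends in $y$; this does not affect the argument.)
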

\begin{proof}
The proof follows directly from (\ref{16}). 
\end{proof}

\begin{lem}\label{lem4.3.3}
{\it We have $(\lambda -1)(\mathfrak{W}^{\prime}_k)\cdot(\lambda -1)(\mathfrak{W}^{\prime}_l)\subset(\lambda -1)(\mathfrak{W}^{\prime}_{k+l})$ for any $k,l\ge 1$. }
\end{lem}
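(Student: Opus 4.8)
The plan is to verify the inclusion on generators and extend by $\Q$-bilinearity of composition. So I would take a generator $X=L_{z_a}\mathcal{H}_w\in\mathfrak{W}_k^{\prime}$ (hence $1\le a\le k$, $w\in\FH^1_{k-a}$) and a generator $Y=L_{z_b}\mathcal{H}_{w'}\in\mathfrak{W}_l^{\prime}$ (hence $1\le b\le l$, $w'\in\FH^1_{l-b}$), and aim to produce an explicit $Z\in\mathfrak{W}_{k+l}^{\prime}$ with $(\lambda-1)(X)(\lambda-1)(Y)=(\lambda-1)(Z)$. Writing $(\lambda-1)(X)=\mathcal{H}_{z_aw}-L_{z_a}\mathcal{H}_w$ and $(\lambda-1)(Y)=\mathcal{H}_{z_bw'}-L_{z_b}\mathcal{H}_{w'}$ from the definition of $\lambda$, I would expand the composition into four terms.

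The two facts that make everything collapse are: compositions of harmonic-multiplication operators contract via associativity of $\ast$, namely $\mathcal{H}_u\mathcal{H}_v=\mathcal{H}_{u\ast v}$; and the commutator identity \eqref{16}, which I would use in the form $\mathcal{H}_{z_aw}L_{z_b}=L_{z_b}\mathcal{H}_{z_aw}+L_{z_a}\mathcal{H}_wL_{z_b}+L_{z_{a+b}}\mathcal{H}_w$ to rewrite the only genuinely mixed term. Substituting this, the two copies of $L_{z_a}\mathcal{H}_wL_{z_b}\mathcal{H}_{w'}$ cancel, and after collapsing the remaining harmonic compositions one is left with
\[
(\lambda-1)(X)\,(\lambda-1)(Y)=\mathcal{H}_{(z_aw)\ast(z_bw')}-L_{z_a}\mathcal{H}_{w\ast z_bw'}-L_{z_b}\mathcal{H}_{z_aw\ast w'}-L_{z_{a+b}}\mathcal{H}_{w\ast w'}.
\]

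Next I would apply the defining rule ii) of $\ast$ to the single word-product $(z_aw)\ast(z_bw')=z_a(w\ast z_bw')+z_b(z_aw\ast w')+z_{a+b}(w\ast w')$. Since $v\mapsto\mathcal{H}_v$ is linear and $\mathcal{H}_{z_cu}=\lambda(L_{z_c}\mathcal{H}_u)$ by definition of $\lambda$, the leading term equals $\lambda(Z)$ with
\[
Z:=L_{z_a}\mathcal{H}_{w\ast z_bw'}+L_{z_b}\mathcal{H}_{z_aw\ast w'}+L_{z_{a+b}}\mathcal{H}_{w\ast w'},
\]
while the three remaining $L$-terms in the displayed identity are exactly $-Z$. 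Hence $(\lambda-1)(X)(\lambda-1)(Y)=\lambda(Z)-Z=(\lambda-1)(Z)$, as required.

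It then remains to check $Z\in\mathfrak{W}_{k+l}^{\prime}$, which is pure degree bookkeeping: since $\ast$ preserves $\FH^1$ and is degree-additive, in each summand the left index and the homogeneous degree of the harmonic argument add to $k+l$, and the constraints $a\le k$, $b\le l$ keep the index ($a$, $b$, or $a+b$) in the admissible range $1\le\cdot\le k+l$. The only real obstacle is the opening algebraic step — correctly expanding $\mathcal{H}_{z_aw}L_{z_b}$ through \eqref{16} and tracking the cancellation — together with the recognition that rule ii) applied to $(z_aw)\ast(z_bw')$ reassembles precisely the three generators forming $Z$; once these line up, both the operator identity and the degree count are mechanical.
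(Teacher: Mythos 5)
Your proposal is correct and follows essentially the same route as the paper: expand the product of the two $(\lambda-1)$-images of generators into four terms, rewrite the mixed term $\mathcal{H}_{z_aw}L_{z_b}\mathcal{H}_{w'}$ via the commutator identity \eqref{16}, cancel the $L_{z_a}\mathcal{H}_wL_{z_b}\mathcal{H}_{w'}$ terms, and regroup using the harmonic product rule ii) so that the result is $(\lambda-1)$ applied to $L_{z_a}\mathcal{H}_{w\ast z_bw'}+L_{z_b}\mathcal{H}_{z_aw\ast w'}+L_{z_{a+b}}\mathcal{H}_{w\ast w'}$. Your degree bookkeeping at the end is slightly more explicit than the paper's, but the argument is the same.
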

\begin{proof} 
Let $d$ and $d^{\prime}$ be the weights of words $w$ and $w^{\prime}$, respectively. The assertion $(\lambda -1)(L_{z_k}\mathcal{H}_w)\cdot(\lambda -1)(L_{z_l}\mathcal{H}_{w^{\prime}})\in (\lambda -1)(\mathfrak{W}^{\prime}_{k+l+d+d^{\prime}})$ is only necessary to show.
\begin{eqnarray*}
\text{LHS} &=& (\mathcal{H}_{z_kw}-L_{z_k}\mathcal{H}_w)(\mathcal{H}_{z_lw^{\prime}}-L_{z_l}\mathcal{H}_{w^{\prime}}) \\
&=& \mathcal{H}_{z_kw\ast z_lw^{\prime}}-\mathcal{H}_{z_kw}L_{z_l}\mathcal{H}_{w^{\prime}}-L_{z_k}\mathcal{H}_{w\ast z_lw^{\prime}}+L_{z_k}\mathcal{H}_wL_{z_l}\mathcal{H}_{w^{\prime}} \\
&=& \mathcal{H}_{z_k(w\ast z_lw^{\prime})+z_l(z_kw\ast w^{\prime})+z_{k+l}(w\ast w^{\prime})}-(L_{z_k}\mathcal{H}_wL_{z_l} \\
&{}& \quad +L_{z_l}\mathcal{H}_{z_kw}+L_{z_{k+l}}\mathcal{H}_w)\mathcal{H}_{w^{\prime}}-L_{z_k}\mathcal{H}_{w\ast z_lw^{\prime}}+L_{z_k}\mathcal{H}_wL_{z_l}\mathcal{H}_{w^{\prime}} \\
&=& \mathcal{H}_{z_k(w\ast z_lw^{\prime})}-L_{z_k}\mathcal{H}_{w\ast z_lw^{\prime}}+\mathcal{H}_{z_l(z_kw\ast w^{\prime})}-L_{z_l}\mathcal{H}_{z_kw\ast w^{\prime}} \\
&{}& \quad +\mathcal{H}_{z_{k+l}(w\ast w^{\prime})}-L_{z_{k+l}}\mathcal{H}_{w\ast w^{\prime}} \\
&=& (\lambda -1)(L_{z_k}\mathcal{H}_{w\ast z_lw^{\prime}}+L_{z_l}\mathcal{H}_{z_kw\ast w^{\prime}}+L_{z_{k+l}}\mathcal{H}_{w\ast w^{\prime}}). \\
&\in& \mathrm{RHS}.
\end{eqnarray*}
Hence, the lemma is proven. 
\end{proof}

\begin{lem}\label{lem4.3.4}
{\it For any $X\in\mathfrak{W}^{\prime}$, we have $\lambda(X)(1)=X(1)$.}
\end{lem}
\begin{proof}
$(\lambda-1)(L_{z_k}\mathcal{H}_w)(1)=\mathcal{H}_{z_kw}(1)-L_{z_k}\mathcal{H}_w(1)=z_kw-z_kw=0.$
\end{proof}

\begin{lem}\label{lem4.3.5}
{\it Let $X\in\mathfrak{W}$. If $X(1)=0$ and $[X,L_{z_k}]=0$ for any $k\ge 1$, we have $X=0$.}
\end{lem}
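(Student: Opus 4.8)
The plan is to prove this exactly as Lemma~\ref{lem9} was proved, transporting that argument from $\FH$ with the right-concatenations $R_x,R_y$ to $\FH^1$ with the left-concatenations $L_{z_k}$. The structural fact I would record first is that $\FH^1$ is spanned over $\Q$ by $1$ together with all products $z_{k_1}z_{k_2}\cdots z_{k_r}$ with $r\ge 1$ and $k_i\ge 1$. Indeed, any nontrivial word of $\FH^1$ lies in $\FH y$, hence ends in $y$, so it factors uniquely as $x^{a_1}y\cdots x^{a_r}y=z_{a_1+1}\cdots z_{a_r+1}$ with $z_k=x^{k-1}y$; and such a product is precisely $L_{z_{k_1}}\cdots L_{z_{k_r}}(1)$. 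Thus every element of $\FH^1$ is obtained from the unit by applying a composite of the operators $L_{z_k}$, which is what makes the two hypotheses of the lemma interact.

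Since $X\in\mathfrak{W}$ is a $\Q$-linear endomorphism of $\FH^1$, it then suffices to check $X(w)=0$ for each spanning word $w=z_{k_1}\cdots z_{k_r}$. Using $[X,L_{z_k}]=0$ for every $k\ge 1$, I would move $X$ to the right past each left-multiplication in turn:
\[
X(z_{k_1}\cdots z_{k_r})=X L_{z_{k_1}}\cdots L_{z_{k_r}}(1)=L_{z_{k_1}}X L_{z_{k_2}}\cdots L_{z_{k_r}}(1)=\cdots=L_{z_{k_1}}\cdots L_{z_{k_r}}X(1).
\]
The hypothesis $X(1)=0$ makes the right-hand side vanish, so $X$ kills every spanning word, and $\Q$-linearity yields $X\equiv 0$.

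The argument carries no genuine obstacle; it is a verbatim analogue of Lemma~\ref{lem9}, the only point deserving a moment of care being the generation claim in the first paragraph, namely that $\{L_{z_k}\}_{k\ge 1}$ really produces all of $\FH^1$ from the unit (this is where all $k\ge 1$, rather than $k>1$, and the factorization of words ending in $y$ are used). I note in passing that because the stated hypothesis is $X\in\mathfrak{W}$, one even has the shorter route $X(1)=\sum_w C_w(w\ast 1)=\sum_w C_w w=0$, which forces each $C_w=0$ by linear independence of distinct words; however, I would favor the telescoping proof above, since it uses both hypotheses in the form actually needed when the lemma is applied in Theorem~\ref{thm4.4} to an operator not yet known to be a finite combination of the $\mathcal{H}_w$.
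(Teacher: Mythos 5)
Your proof is correct and is essentially identical to the paper's: the author likewise telescopes $X(z_{k_1}\cdots z_{k_n})=z_{k_1}X(z_{k_2}\cdots z_{k_n})=\cdots=z_{k_1}\cdots z_{k_n}X(1)=0$ using the commutation with each $L_{z_k}$ and the hypothesis $X(1)=0$. Your explicit justification that $1$ and the words $z_{k_1}\cdots z_{k_r}$ span $\FH^1$ is a point the paper leaves implicit, but it is the same argument.
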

\begin{proof}
If $[X,L_{z_k}]=0$ for any $k\ge 1$,
$$ X(z_{k_1}\cdots z_{k_n})=z_{k_1}X(z_{k_2}\cdots z_{k_n}) = \cdots = z_{k_1}\cdots z_{k_n}X(1)=0. $$
\end{proof}

\end{document}